\numberwithin{equation}{section}
\numberwithin{figure}{section}
\titleformat{\subsection}[runin]
{\bfseries} {\thesubsection{.}}{0.15cm}{}[.]
\titleformat{\subsubsection}[runin]
{\em}{\thesubsubsection{.}}{0.15cm}{}[.]
\newtheorem{theorem}{Theorem}[section]
\newtheorem{proposition}[theorem]{Proposition}
\newtheorem{lemma}[theorem]{Lemma}
\newtheorem{corollary}[theorem]{Corollary}
\theoremstyle{definition}
\newtheorem{definition}[theorem]{Definition}
\newtheorem{remark}[theorem]{Remark}
\newtheorem{problem}[theorem]{Problem}
\newtheorem{example}[theorem]{Example}
\newcommand\Ocal{\mathcal{O}}
\newcommand\Ascr{\mathscr{A}}
\newcommand\Cscr{\mathscr{C}}
\newcommand\Oscr{\mathscr{O}}
\newcommand\B{\mathbb{B}}
\newcommand\C{\mathbb{C}}
\newcommand\D{\overline{\mathbb D}}
\renewcommand\D{\mathbb D}
\newcommand\N{\mathbb{N}}
\newcommand\R{\mathbb{R}}
\newcommand\igot{\mathfrak{i}}
\renewcommand\igot{\mathfrak{i}}
\renewcommand\imath{\igot}
\newcommand\hra{\hookrightarrow}
\newcommand\di{\partial}
\newcommand\dist{\mathrm{dist}}
\newcommand\Conv{\mathrm{Conv}}
\def\dist{\mathrm{dist}}
\def\Ell1{\mathrm{Ell_1}}
\def\CEll1{\mathrm{CEll_1}}
\def\cB{\overline{\B}} 
\begin{document}

\fancyhead[LO]{Proper holomorphic maps in Euclidean spaces avoiding unbounded convex sets}
\fancyhead[RE]{B.\ Drinovec Drnov\v sek and F.\ Forstneri\v c} 
\fancyhead[RO,LE]{\thepage}

\thispagestyle{empty}


\begin{center}
{\bf \LARGE Proper holomorphic maps in Euclidean spaces \\ avoiding unbounded convex sets}

\vspace*{0.5cm}

{\large\bf  Barbara Drinovec Drnov\v sek and Franc Forstneri{\v c}} 
\end{center}

\vspace*{0.5cm}

{\small
\noindent {\bf Abstract} 
%
We show that if $E$ is a closed convex set in $\C^n$ $(n>1)$ contained in a closed halfspace 
$H$ such that $E\cap bH$ is nonempty and bounded, then the concave domain 
$\Omega = \C^n\setminus E$ contains images of proper holomorphic maps
$f:X\to \C^n$ from any Stein manifold $X$ of dimension $<n$, with approximation
of a given map on closed compact subsets of $X$. 
If in addition $2\dim X+1\le n$ then $f$ can be chosen an embedding, and if $2\dim X=n$ 
then it can be chosen an immersion. Under a stronger condition on $E$ we also obtain
the interpolation property for such maps on closed complex subvarieties.

\hspace*{0.1cm}
}

\noindent{\bf Keywords}\hspace*{0.1cm} 
Stein manifold, holomorphic embedding, Oka manifold, minimal surface, convexity

\vspace*{0.1cm}

\noindent{\bf MSC (2010):}\hspace*{0.1cm}  32H02, 32Q56; 52A20, 53A10
%
%
%
%
%

\noindent {\bf Date: \rm 3 January 2023}


%
%

\bigskip\medskip

\centerline{\em In memoriam Nessim Sibony}

\bigskip

\section{Introduction}\label{sec:intro}  
Let $X$ be a Stein manifold. Denote by $\Oscr(X,\C^n)$ the Frechet space of holomorphic maps 
$X\to\C^n$ endowed with the  compact-open topology and write $\Oscr(X,\C)=\Oscr(X)$. 
A theorem of Remmert \cite{Remmert1956} (1956), Narasimhan \cite{Narasimhan1960AJM} (1960), and Bishop \cite{Bishop1961AJM} (1961)
states that almost proper maps are residual in $\Oscr(X,\C^n)$ if $\dim X=n$, 
proper maps are dense if $\dim X<n$, proper immersions are dense if $2\dim X \le n$, and 
proper embeddings are dense if $2\dim X<n$. A proof is also given in the 
monograph \cite{GunningRossi2009} by Gunning and Rossi. 

It is natural to ask how much space proper maps need. We pose the following question.

%
%
\begin{problem}\label{pr:problem1}
For which domains $\Omega\subset \C^n$ are proper holomorphic maps 
(immersions, embeddings) $X\to \C^n$ as above,
with images contained in $\Omega$, dense in $\Oscr(X,\Omega)$?
\end{problem}

It is evident that $\Omega$ cannot be contained in a halfspace of $\C^n$ since 
every holomorphic map from $\C$ to a halfspace lies in a complex
hyperplane. In this paper we give an affirmative answer for concave domains whose complement $E=\C^n\setminus \Omega$ 
satisfies the following condition.

%
%
\begin{definition} \label{def:BCEH}
A closed convex set $E$ in a real or complex Euclidean space $V$ has 
{\em bounded convex exhaustion hulls} (BCEH) if for every compact convex set $K$ in $V$
\begin{equation}\label{eq:h}
	\text{the set}\ \ h(E,K) = \Conv(E\cup K)\setminus E \ \ \text{is bounded.}
\end{equation}
\end{definition}

Here, $\Conv$ denotes the convex hull. The following is our first main result.

%
%
\begin{theorem}\label{th:main}
Let $E$ be an unbounded closed convex set in $\C^n$ $(n>1)$ with  
bounded convex exhaustion hulls. 
Given a Stein manifold $X$ with $\dim X<n$, a compact $\Ocal(X)$-convex set $K$ in $X$, 
and a holomorphic map $f_0:K\to\C^n$ with $f_0(bK)\subset \Omega=\C^n\setminus E$, 
we can approximate $f_0$ uniformly on $K$ by proper holomorphic maps $f:X\to \C^n$ 
satisfying $f(X\setminus \mathring K) \subset \Omega$. The map $f$ can be chosen an 
embedding if $2\dim X < n$ and an immersion if $2\dim X \le n$. 
\end{theorem}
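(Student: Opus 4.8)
The plan is to obtain $f$ as a locally uniform limit of holomorphic maps built by the usual Stein exhaustion scheme, where the only nontrivial content is a ``push to infinity inside $\Omega$'' step powered by the convexity of $E$ and the BCEH hypothesis. First I would fix a smooth strictly plurisubharmonic Morse exhaustion $\rho:X\to\R$ with $K\subset\{\rho<0\}$ and an increasing sequence of regular values $0=c_0<c_1<\cdots\to\infty$, giving an exhaustion of $X$ by $\Ocal(X)$-convex sublevel sets $K=K_0\subset K_1\subset\cdots$, $K_j=\{\rho\le c_j\}$, with $K_j\subset\mathring K_{j+1}$. On the target side I would use BCEH to manufacture a convex exhaustion of $\C^n$ adapted to $E$: for $R>0$ put $P_R=\Conv(E\cup\overline{B}_R)$, a closed convex set with $E\subset P_R$ and, by \eqref{eq:h} applied to the compact convex set $\overline B_R$, with $P_R\setminus E$ bounded; since $\overline B_R\subset P_R$ one gets $\C^n\setminus P_R\subset\{|z|>R\}\cap\Omega$. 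The point of this construction is that $\C^n\setminus P_R$ is the complement of a single convex set, it lies in $\Omega$, and driving a boundary image outside $P_R$ with $R\to\infty$ simultaneously forces it to infinity and keeps it off $E$.

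The heart of the argument is the inductive step. Given a holomorphic map $f_j$ near $K_j$ with $f_j(K_j\setminus\mathring K)\subset\Omega$ and $f_j(bK_j)\subset\C^n\setminus P_{R_{j-1}}$, I would produce $f_{j+1}$ near $K_{j+1}$ that (i) approximates $f_j$ as closely as desired on $K_{j-1}$, (ii) maps the annulus $K_{j+1}\setminus\mathring K_j$ into $\C^n\setminus P_{R_j}$, and (iii) satisfies $f_{j+1}(bK_{j+1})\subset\C^n\setminus P_{R_j}$ for a prescribed $R_j>R_{j-1}$. Passing from $K_j$ to $K_{j+1}$ is organized by the topology of $\rho$: across a noncritical interval one attaches convex bumps, and across a critical level one first attaches a handle and then reduces to the noncritical case. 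On each bump the required push is obtained by solving a Riemann--Hilbert boundary value problem, i.e.\ by gluing to $f_j$ a family of small analytic discs whose centres are the current boundary values and whose boundaries are driven out of $P_{R_j}$; the base of the whole induction is the same operation applied once to the collar of $bK$, using that $f_0(bK)\subset\Omega$ is open. I expect this to be the main obstacle: one cannot control the argument of holomorphic functions and therefore cannot push the image in a fixed real direction. The resolution is that $P_{R_j}$ is convex, so $z\mapsto\dist(z,P_{R_j})$ is plurisubharmonic and its superlevel sets are exactly the ``pseudoconcave'' targets into which analytic-disc boundaries can be pushed; convexity lets the discs be chosen leaving $P_{R_j}$, hence staying in $\Omega$, while \eqref{eq:h} guarantees that $\bigcup_R(\C^n\setminus P_R)$ reaches every end of $\C^n$, so there is always room to continue to the next level without ever crossing $E$.

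Choosing the approximation errors $\varepsilon_j$ with $\sum_j\varepsilon_j<\infty$ and $\varepsilon_j$ small compared with the margin $\dist(f_{j+1}(K_{j+1}\setminus\mathring K_j),P_{R_j})$ built up in that step, the sequence $f_j$ converges uniformly on compacts of $X$ to a holomorphic map $f:X\to\C^n$. By construction $f$ approximates $f_0$ on $K$ and $f(X\setminus\mathring K)\subset\Omega$, since each annulus maps into $\C^n\setminus P_{R_j}\subset\Omega$ and the tail perturbations are too small to leave it. Moreover $f$ is proper: for $x\notin\mathring K_{j-1}$ one has $x\in K_{m+1}\setminus\mathring K_m$ for some $m\ge j-1$, so $f(x)\in\C^n\setminus P_{R_m}\subset\C^n\setminus P_{R_{j-1}}\subset\{|z|>R_{j-1}\}$; as $R_{j-1}\to\infty$ this shows preimages of balls are compact.

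Finally, to upgrade $f$ to an embedding when $2\dim X<n$ and to an immersion when $2\dim X\le n$, I would fold a general-position step into the induction: at each stage perturb $f_{j+1}$ by a small amount, using dominating holomorphic sprays on $\C^n$, so as to make it an immersion (respectively injective) on $K_{j+1}$. This is possible by the standard transversality dimension count in these ranges, and because $\Omega$ is open and the perturbations are arbitrarily small, they preserve both the inclusion into $\Omega$ outside $\mathring K$ and the properness estimates of the previous paragraph. A diagonal argument then yields a proper holomorphic $f$ that is globally an immersion, respectively an injective immersion, hence an embedding.
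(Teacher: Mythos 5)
There is a genuine gap at the heart of your inductive step: you have no mechanism for \emph{extending} the map from a neighbourhood of $K_j$ to a neighbourhood of $K_{j+1}$ while forcing the image of the new piece $K_{j+1}\setminus\mathring K_j$ into $\C^n\setminus P_{R_j}$. The Riemann--Hilbert/disc-gluing (or peak-function) step only modifies a map on its \emph{existing} domain near $bK_j$, lifting the boundary values with respect to a convex function on the target; it does not enlarge the domain of definition, and it says nothing about where the handles attached at critical levels of $\rho$, or even the new noncritical collar, are sent. Since the target constraint $\C^n\setminus P_{R_j}$ is a concave domain, ordinary Oka--Weil/Cartan extension gives no control over the image of $K_{j+1}\setminus K_j$, and for a general unbounded convex $E$ (e.g.\ a halfspace, where the conclusion fails for $X=\C$) this constraint is genuinely obstructive. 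This is exactly where the paper invokes the fact that the complement of a closed convex set with BCEH contains no affine real line and is therefore an \emph{Oka domain} (Corollary \ref{cor:Oka}, resting on \cite{ForstnericWold2022Oka}), and then applies the Oka principle with approximation (Theorem \ref{th:Oka}) to extend the pushed map $g_k$ from $\bar D_k$ to a holomorphic map on $X$ sending $X\setminus D_k$ into $\Omega_{k+1}$. Your proposal never uses the Oka property, and without it the assertion that ``there is always room to continue to the next level'' is unsubstantiated; the phrase is precisely the theorem's main difficulty.

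Two secondary points. First, gluing small analytic discs via a Riemann--Hilbert problem is adequate only for $\dim X=1$; for $\dim X\ge 2$ the boundary-pushing requires the Hakim--Dor peak-function technique (Lemma \ref{lem:main}, adapted from \cite{DrinovecForstneric2010AJM}), and it is carried out against \emph{strictly} convex functions whose consecutive level sets bound compact caps. Your $\dist(\cdot,P_R)$ is convex but not strictly convex, and its level sets do not obviously carry the cap structure needed for Proposition \ref{prop:pushing}; the paper arranges this by first shrinking $E$ to a strictly convex real-analytic $E_0$ (Azagra) and building the exhaustion $E_{k+1}=\Conv(E_k\cup r_k\cB)$ with the structure of Proposition \ref{prop:exhaustion}. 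Second, your conditions (i)--(iii) leave $f_{j+1}$ uncontrolled on the previous annulus $K_j\setminus\mathring K_{j-1}$, which is exactly where the boundary modification takes place; without the analogue of the paper's condition (b), namely $f_{k+1}(\overline{D_k\setminus D_{k-1}})\subset\Omega_k$, the limit map can fail to send $X\setminus\mathring K$ into $\Omega$.
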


In this paper, a map $f:K\to \C^n$ from a compact set $K$ is said to be holomorphic 
if it is the restriction to $K$ of a holomorphic map on an open neighbourhood of $K$.

In particular, if $f_0(K)\subset \Omega$ then the theorem gives uniform approximation
of $f_0$ by proper holomorphic maps $f:X\to\C^n$ with $f(X)\subset \Omega$.
If $bE$ is of class $\Cscr^1$ and strictly convex near infinity, 
we obtain an analogue of Theorem \ref{th:main}
with additional interpolation on a closed complex subvariety $X'$ of $X$
such that $f_0:X'\to \C^n$ is proper holomorphic;  
see Theorem \ref{th:interpolation}. Without the condition on the range,
interpolation of proper holomorphic embeddings $X\hra\C^n$ 
on a closed complex subvariety was obtained by 
Acquistapace et al.\ \cite{AcquistapaceBrogliaTognoli1975} in 1975.

The analogue of the BCEH condition for unbounded closed sets 
in Stein manifolds, with the convex hull replaced by the holomorphically convex hull, 
is used in holomorphic approximation theory of Arakelyan and Carleman type; 
see the survey in \cite{FornaessForstnericWold2020}.

It is evident that a closed convex set $E\subset\R^n$ 
has BCEH if and only if there is an increasing sequence
$K_1\subset K_2\subset \cdots$ of compact convex sets exhausting $\R^n$
such that the set $h(E,K_j)$ (see \eqref{eq:h}) is bounded for every $j=1,2,\ldots$. 
In particular, BCEH is a condition at infinity which is invariant under perturbations
supported on a compact subset.   For compact convex sets $E\subset\C^n$, 
Theorem \ref{th:main} was proved in \cite{ForstnericRitter2014}; 
in this case BCEH trivially holds.

We show in Section \ref{sec:BCEH} that a closed convex set $E$ in $\R^n$ has BCEH if 
and only if $E$ is continuous in the sense of Gale and Klee \cite{GaleKlee1959}; 
see Proposition \ref{prop:BCEHcontinuous}. 
If $E$ has BCEH then $\Conv(E\cup K)$ is closed for any compact convex set 
$K\subset\R^n$ (see \cite[Theorem 1.5]{GaleKlee1959}).
If such $E$ is unbounded, which is the main case of interest, there are 
affine coordinates $(x,y)\in \R^{n-1}\times\R$ such that 
$
	E=E_\phi =\{(x,y)\in\R^n: y \ge \phi(x)\}
$
is the epigraph of a convex function $\phi:\R^{n-1}\to \R_+=[0,+\infty)$ growing at least 
linearly near infinity (see Proposition \ref{prop:epigraph}). In particular, an unbounded 
closed convex set $E\subset \C^n$ with BCEH is of the form 
\begin{equation}\label{eq:EinCn}
	E =E_\phi=\{z=(z',z_n)\in\C^n: \Im z_n \ge \phi(z',\Re z_n)\}
\end{equation}
in some affine complex coordinates $z=(z',z_n)$ on $\C^n$, with $\phi$ as above. 
(Here, $\Re$ and $\Im$ denote the real and the imaginary part.)
For a convex function $\phi$ of class $\Cscr^1$ we give a differential characterization
of the BCEH condition on its epigraph $E_\phi$; see Proposition \ref{prop:BCEH}.
The BCEH property holds if the radial derivative of $\phi$
tends to infinity; see Corollary \ref{cor:derivative}.
On the other hand, there are convex functions of linear growth
whose epigraphs have BCEH; see Example \ref{ex:lineargrowth}.
By Proposition \ref{prop:approximation}, a convex function $\phi$ 
with at least linear growth at infinity can be approximated uniformly on compacts 
by functions $\psi\le \phi$ of the same kind whose epigraphs $E_\psi$ 
have BCEH. This allows us to extend Theorem \ref{th:main} as follows; 
see Section \ref{sec:proof} for the proof. 

%
%
\begin{corollary}\label{cor:main}
The conclusion of Theorem \ref{th:main} holds for any convex epigraph $E_\phi$ 
of the form \eqref{eq:EinCn} such that $\phi\ge 0$ and the set $\{\phi=0\}$ 
is nonempty and compact.
\end{corollary}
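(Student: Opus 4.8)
The plan is to deduce the corollary from Theorem \ref{th:main} by approximating the epigraph $E_\phi$ from below by epigraphs $E_\psi$ that have BCEH, at the cost of enlarging the complement only slightly near the boundary data. First I would check that the hypotheses on $\phi$ force at least linear growth at infinity, which is exactly what is needed to apply Proposition \ref{prop:approximation}. Since $\phi\ge 0$ is convex and $\{\phi=0\}=\{\phi\le 0\}$ is nonempty and compact, every sublevel set $\{\phi\le\alpha\}$ is bounded: the recession cone of a convex function coincides with the recession cone of each of its nonempty sublevel sets, and boundedness of $\{\phi\le 0\}$ makes this cone trivial, so each closed sublevel set is bounded. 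Fix $\alpha=1$, let $\rho=\max\{|w|:\phi(w)\le 1\}<\infty$, and pick a minimizer $w_0$ with $\phi(w_0)=0$; after translation assume $w_0=0$. For a unit vector $u$ the function $g(t)=\phi(tu)$ is convex with $g(0)=0$, so $t\mapsto g(t)/t$ is nondecreasing on $(0,\infty)$; comparing with the value $1$ attained at some $t^\ast\le\rho$ gives $\phi(w)=g(|w|)\ge |w|/\rho$ for $|w|\ge\rho$. Thus $\phi$ grows at least linearly, and Proposition \ref{prop:approximation} applies to it.

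Next, write a point of $\C^n$ as $(w,y)$ with $w=(z',\Re z_n)\in\R^{2n-1}$ and $y=\Im z_n\in\R$, so that in the notation of \eqref{eq:EinCn} we have $E_\phi=\{(w,y):y\ge\phi(w)\}$ and $\Omega=\C^n\setminus E_\phi=\{(w,y):y<\phi(w)\}$. The given map satisfies $f_0(bK)\subset\Omega$, and since $f_0(bK)$ is compact and $\phi$ is continuous, the gap $\delta=\min\{\phi(w)-y:(w,y)\in f_0(bK)\}$ is strictly positive. Let $Q\subset\R^{2n-1}$ be the compact image of $f_0(bK)$ under the projection $(w,y)\mapsto w$. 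By Proposition \ref{prop:approximation} there is a convex function $\psi\le\phi$ of the same kind whose epigraph $E_\psi$ has BCEH, with $\max_{w\in Q}\bigl(\phi(w)-\psi(w)\bigr)<\delta$. For $(w,y)\in f_0(bK)$ this yields $\psi(w)>\phi(w)-\delta\ge y$, so $(w,y)\in\Omega_\psi:=\C^n\setminus E_\psi$; hence $f_0(bK)\subset\Omega_\psi$. I expect this to be the only delicate point: passing from $\phi$ to the smaller $\psi$ \emph{enlarges} the excluded set $E$, so one must keep $\psi$ close enough to $\phi$ over $Q$ to avoid swallowing the boundary values $f_0(bK)$, which the positive gap $\delta$ exactly permits.

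Finally I would apply Theorem \ref{th:main} to the unbounded closed convex set $E_\psi$, which has BCEH: it produces proper holomorphic maps $f:X\to\C^n$ approximating $f_0$ uniformly on $K$ with $f(X\setminus\mathring K)\subset\Omega_\psi$, and with the embedding (resp.\ immersion) conclusion when $2\dim X<n$ (resp.\ $2\dim X\le n$). Since $\psi\le\phi$ gives $E_\phi\subset E_\psi$, and hence $\Omega_\psi\subset\Omega$, the same map satisfies $f(X\setminus\mathring K)\subset\Omega$, which is precisely the assertion of the corollary.
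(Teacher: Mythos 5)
Your proposal is correct and follows essentially the same route as the paper: approximate $E_\phi$ from outside by an epigraph $E_\psi\supset E_\phi$ with BCEH via Proposition \ref{prop:approximation}, keeping $\psi$ close enough to $\phi$ over the relevant compact set so that $f_0(bK)\subset\C^n\setminus E_\psi$, and then apply Theorem \ref{th:main} to $E_\psi$. The extra verifications you include (linear growth of $\phi$ and the positive gap $\delta$) are correct but are already subsumed in the paper's appeal to Proposition \ref{prop:approximation} and Remark \ref{rem:proper}.
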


A closed convex set $E\subset \C^n$ with BCEH does not contain any affine real line
(see Proposition \ref{prop:epigraph}), and for $n>1$ 
its complement $\Omega=\C^n\setminus E$
is an Oka domain according to Wold and the second named author; 
see \cite[Theorem 1.8]{ForstnericWold2022Oka}. This fact plays an important
role in our proof of Theorem \ref{th:main}, given in Section \ref{sec:proof}.
(The precise result from Oka theory which we shall use is stated as 
Theorem \ref{th:Oka}.)
Among closed convex epigraphs \eqref{eq:EinCn}, the class of sets 
with Oka complement is strictly bigger than the class of sets with BCEH. 
In particular, the former class contains many sets containing boundary lines, 
which is impossible for a set with BCEH.  

%
%
\begin{problem}\label{prob:sublinear}
Is there a (not necessarily convex) set $E_\phi\subset \C^n$ of the form \eqref{eq:EinCn} with 
$\phi\ge 0$ of sublinear growth for which Theorem \ref{th:main} holds?
Is there a set of this kind in $\C^2$ such that $\C^2\setminus E_\phi$ contains 
the image of a proper holomorphic disc $\D=\{z\in\C:|z|<1\}\to\C^2$? 
\end{problem}

Theorem \ref{th:main} is the first general result in the literature 
providing proper holomorphic maps $X\to \C^n$ from any Stein manifold of dimension 
$<n$ whose images avoid large convex sets in $\C^n$ close to a halfspace, 
and with approximation of a given map on a compact holomorphically convex set in $X$. 
Without the approximation condition and assuming that $\dim X\le n-2$, 
there are proper holomorphic maps of $X$ into a complex hyperplane in 
$\C^n\setminus E$. 

On the other hand, there are many known results concerning 
proper holomorphic maps in Euclidean spaces and in more general
Stein manifolds whose images avoid certain small closed 
subsets, such as compact or complete pluripolar ones, and results in which the source manifold 
is the disc $\D=\{z\in\C:|z|<1\}$. Proper holomorphic discs in $\C^2$ avoiding closed complete 
pluripolar sets of the form $E=E'\times\C$, with $E'\subset\C$, were constructed by 
Alexander \cite{Alexander1977} in 1977. The first named author showed in 
\cite{Drinovec2004MRL} (2004) that for every closed complete pluripolar set $E$ in a 
Stein manifold $Y$ with $\dim Y>1$ and point $p\in Y\setminus E$ there is a proper holomorphic 
disc $f:\D\to Y$ with $p\in f(\D)\subset Y\setminus E$. 
If $Y=\C^2$ there also exist embedded holomorphic discs with this property according to 
Borell et al.\ \cite{BorellKutzschebauchWold2008} (2008), and for $\dim Y\ge 3$ this holds 
by the general position argument. Proper holomorphic discs in $\C^2$ 
with images contained in certain concave cones were constructed by Globevnik and the 
second named author \cite{ForstnericGlobevnik2001MRL} in 2001. 
They also constructed proper holomorphic discs in $\C^2$ with images in 
$(\C\setminus \{0\})^2$, and hence proper harmonic discs $\D\to \R^2$,
disproving a conjecture by Schoen and Yau \cite[p.\ 18]{SchoenYau1997}.
(Another construction of such maps was given by Bo\v zin \cite{Bozin1999IMRN}.)
More generally, it was shown by Alarc\'on and L\'opez \cite[Corollary 1.1]{AlarconLopez2012JDG}
in 2012 that every open Riemann surface $X$ admits a proper harmonic map to $\R^2$ 
which is the projection of a conformal minimal immersion $X\to\R^3$.
The aforementioned result from \cite{ForstnericGlobevnik2001MRL} 
was used by the first named author in \cite{Drinovec2002MZ} 
(2002) to classify closed convex sets in $\C^2$ whose complement is 
filled by images of holomorphic discs which are proper in $\C^2$. 
More recently, Forstneri\v c and Ritter \cite{ForstnericRitter2014} (2014) 
proved Theorem \ref{th:main} in the case when $E\subset\C^n$ is 
a compact polynomially convex set and $2\dim X\le n$ (for immersions) or 
$2\dim X<n$ (for embeddings), and for proper holomorphic maps $X\to\C^n$ 
when $\dim X<n$ and $E$ is a compact convex set. A further development 
in this direction is the analogue of Theorem \ref{th:main} when 
$\C^n$ is replaced by a Stein manifold $Y$ with the density property
and $E\subset Y$ is a compact $\Oscr(Y)$-convex set; see 
\cite[Remark 4.5]{Forstneric2022Oka} and the references therein.
However, in all mentioned results except those in 
\cite{ForstnericGlobevnik2001MRL,Drinovec2002MZ},
the avoided sets are thin or compact.

%
%
Without insisting on approximation, the theorem of Remmert, Bishop, and Narasimhan
is not optimal with respect to the dimension of the target space.
Indeed, it was shown by Eliashberg and Gromov \cite{EliashbergGromov1992} in 1992,
with an improvement for odd dimensional Stein manifolds by Sch\"urmann \cite{Schurmann1997} 
in 1997, that a Stein manifold $X$ of dimension $m\ge 2$ embeds properly 
holomorphically in $\C^n$ with $n=\left[\frac{3m}{2}\right] +1$, and for $m\ge 1$ it immerses 
properly holomorphically in $\C^n$ with $n=\left[\frac{3m+1}{2}\right]$. 
(See also \cite[Sect.\ 9.3]{Forstneric2017E}.)   
However, the construction method in these papers, which relies 
on the Oka principle for sections of certain stratified holomorphic fibre bundles, 
does not give the density statement, and we do not know whether 
Theorem \ref{th:main} holds for maps to these lower dimensional spaces. 
It is also an open problem whether every open Riemann surface 
embeds properly holomorphically in $\C^2$; see \cite[Secs.\ 9.10-9.11]{Forstneric2017E}
and the survey \cite{Forstneric2018Korea}. 

%
%
Theorem \ref{th:main} is proved in Section \ref{sec:proof}. The proof relies on two main 
ingredients. One is the result of Wold and the second named author 
\cite[Theorem 1.8]{ForstnericWold2022Oka} 
which shows in particular that the complement $\Omega=\C^n\setminus E$ 
of a closed convex set $E$ having BCEH is an Oka domain.
The second main technique comes from the work of Dor \cite{Dor1993,Dor1995} (1993-95), 
following earlier papers by Stens\o nes \cite{Stensones1989} (1989) 
and Hakim \cite{Hakim1990} (1990). Dor constructed proper 
holomorphic immersions and embeddings of any smoothly bounded, relatively compact, 
strongly pseudoconvex domain $D$ in a Stein manifold $X$ into any pseudoconvex domain 
$\Omega$ in $\C^n$ under the dimension conditions in Theorem \ref{th:main}.
Previously, Hakim \cite{Hakim1990} constructed proper holomorphic maps to balls in 
codimension one. The main idea is to inductively lift the image of $bD$ under a 
holomorphic map $f:\bar D\to \Omega$ to a given higher superlevel set of a strongly 
plurisubharmonic exhaustion function $\rho:\Omega\to\R_+$ in a controlled way, taking care 
not to decrease the value of $\rho\circ f$ very much anywhere on $D$
during the process. When $D$ is a finite bordered Riemann surface, this can be 
achieved by using approximate solutions of a Riemann-Hilbert boundary 
value problem (see \cite{DrinovecForstneric2007DMJ}).
In higher dimensions the proof is more subtle and uses carefully controlled 
holomorphic peak functions on $\bar D$ to push a given map $f:\bar D\to \Omega$ 
locally at a point $z\in f(bD)$ in the direction of the zero set $S_z$ of the holomorphic 
(quadratic) Levi polynomial of the exhaustion function $\rho:\Omega\to\R$. 
At a noncritical point $z\in \Omega$ of $\rho$, $S_z$ is a smooth local complex 
hypersurface and the restricted function 
$\rho|_{S_z}$ increases quadratically as we move away from $z$. 
If $\rho$ is a {\em strictly convex} function,
this can be achieved by pushing the image of $f(bD)$ in the direction of 
suitably chosen affine complex hyperplanes. Dor's construction was extended by the authors  
to maps from strongly pseudoconvex domains in Stein manifolds 
to an arbitrary Stein manifold $\Omega$, and also to $q$-convex complex manifolds 
for suitable values of $q\in\N$; 
see the papers \cite{DrinovecForstneric2007DMJ,DrinovecForstneric2010AJM} 
from 2007 and 2010, respectively. In those papers we introduced the technique of 
gluing holomorphic sprays of manifold-valued maps on a strongly pseudoconvex 
Cartan pair with control up to the boundary (a nonlinear version of the Cousin-I problem) 
and a systematic approach for avoiding critical points of a $q$-convex 
Morse exhaustion function on $\Omega$. 

%
%
Earlier constructions of this type, using simpler holomorphic peak functions and higher 
codimension, were given in 1985 by L\o w \cite{Low1985} and Forstneri\v c 
\cite{Forstneric1986TAMS} who showed that every relatively compact 
strongly pseudoconvex domain $D$ in a Stein manifold embeds properly holomorphically 
in a high dimensional Euclidean ball. 
A related result with interpolation on a suitable subset of 
the boundary of $D$ is due to Globevnik \cite{Globevnik1987} (1987).
This peak function technique was inspired by the construction of inner functions on 
the ball of $\C^n$ by L\o w \cite{Low1982} in 1982, based on the work of 
Hakim and Sibony \cite{HakimSibony1982}.

We apply this technique to push the boundary $f_0(bD)\subset \Omega=\C^n\setminus E$
of a holomorphic map $f_0:\bar D\to \C^n$ in Theorem \ref{th:main} 
out of a certain compact convex cap $C$ attached to $E$ along a part of $bC$ contained
in $bE$ and such that the set $E_1=E\cup C$ is convex and has bounded convex 
exhaustion hulls. At the same time, we ensure that the new map $g:\bar D\to \C^n$ still sends 
$\overline {D\setminus K}$ to $\Omega$. For a precise result, 
see Proposition \ref{prop:pushing}. In the next step, we use that 
$\Omega_1=\C^n\setminus E_1$ is an Oka domain (see Corollary \ref{cor:Oka}). 
Since $g(bD)\subset \Omega_1$, we can apply the Oka principle
 (see Theorem \ref{th:Oka}) to approximate $g$ by a holomorphic map 
$f_1:X\to\C^n$ with $f_1(X\setminus D)\subset \Omega_1$. 
Continuing inductively, we obtain a sequence of holomorphic maps $X\to\C^n$ converging to a 
proper map satisfying Theorem \ref{th:main}. The details are given in Section \ref{sec:proof}.

The analogues of Theorem \ref{th:main} and Corollary \ref{cor:main} also hold
for minimal surfaces in $\R^n$.

%
%
\begin{theorem}\label{th:MS}
Let $n\ge 3$, and let $\phi:\R^{n-1}\to\R_+$ be a convex function 
such that the set $\{\phi=0\}$ is nonempty and compact.
Given an open Riemann surface $X$, 
a compact $\Ocal(X)$-convex set $K$ in $X$, and a conformal minimal immersion  
$f_0:U\to\R^n$ from a neighbourhood of $K$ 
with $f_0(bK)\subset \Omega=\{y<\phi(x)\}$, 
we can approximate $f_0$ uniformly on $K$ by proper conformal minimal immersions
$f:X\to \R^n$ (embeddings if $n\ge 5)$ satisfying 
$f(X\setminus \mathring K) \subset \Omega$. 
\end{theorem}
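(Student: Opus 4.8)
The plan is to derive Theorem \ref{th:MS} from the complex-analytic machinery of Theorem \ref{th:main} by exploiting the Enneper--Weierstrass correspondence, which realizes conformal minimal immersions $X\to\R^n$ as real parts of holomorphic null curves into $\C^n$. Recall that a conformal minimal immersion $f=(f_1,\ldots,f_n):X\to\R^n$ of an open Riemann surface has $\di f = (\di f_1,\ldots,\di f_n)$ a nowhere-vanishing holomorphic $1$-form valued in the null quadric $\Acal = \{z\in\C^n : z_1^2+\cdots+z_n^2 = 0\}$, and conversely any such null $1$-form with vanishing real periods integrates to a conformal minimal immersion. The strategy is therefore to run the inductive scheme of Section \ref{sec:proof} at the level of the null curves (the holomorphic data $\di f$), replacing the use of the Oka principle for maps into the concave domain $\Omega_1=\C^n\setminus E_1$ by the corresponding Oka principle for \emph{directed} holomorphic immersions with prescribed periods, which has been developed by Alarc\'on, Forstneri\v c and L\'opez for the null quadric $\Acal$.

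First I would set up the complexification: the condition $f(X\setminus\mathring K)\subset\Omega=\{y<\phi(x)\}$ for $(x,y)\in\R^{n-1}\times\R$ is that the last real coordinate $f_n$ stays below the convex barrier $\phi$ applied to the first $n-1$ coordinates. Since this is a convexity constraint on the real image, I would mirror the two ingredients of the Euclidean proof. The analogue of the \emph{pushing} step (Proposition \ref{prop:pushing}) is carried out using a Riemann--Hilbert type boundary modification for conformal minimal immersions: one pushes the boundary $f_0(bD)$ out of a convex cap $C$ attached to $E_\phi$ so that the enlarged barrier set $E_1=E\cup C$ is still a convex epigraph of the required type, while controlling the drop of the exhaustion along the rest of $X$. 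Such Riemann--Hilbert techniques for minimal surfaces, pushing boundaries in prescribed directions while keeping the null condition, are exactly the tools produced in the minimal-surface program (I would cite the relevant results of Alarc\'on--Drinovec Drnov\v sek--Forstneri\v c--L\'opez). The analogue of the Oka step is then the Mergelyan-with-approximation theorem for conformal minimal immersions: since $\Omega_1$ is defined by a convex constraint, one approximates the modified immersion on $\bar D$ by a conformal minimal immersion defined on all of $X$, mapping $X\setminus D$ into $\Omega_1$, by invoking the Oka principle for null curves together with the flexibility of the period map.

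The dimension count changes because the null quadric $\Acal\subset\C^n$ is a $(n-1)$-dimensional cone, so general position for embeddings requires $2\dim_\R X = 4$ to be at most the real dimension of the relevant jet space; this is why the embedding conclusion is available only for $n\ge 5$ (the immersion conclusion needs no extra dimension, matching $n\ge 3$). Concretely, the embedding case uses that for $n\ge 5$ a generic conformal minimal immersion of a surface is injective, while for $n=3,4$ one only gets immersions, exactly as in the classical minimal surface results. I would organize the induction so that at each stage the surface remains a conformal minimal immersion with real periods, the boundary is pushed one cap further out, and the global Oka--Mergelyan approximation repairs it to a map defined on all of $X$; properness follows as in the holomorphic case because the exhaustion $\rho\circ f$ is forced to infinity on $X\setminus\mathring K$.

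The hard part will be implementing the pushing step for minimal surfaces so that it respects simultaneously the null condition, the real-period vanishing, and the convex barrier geometry of $E_\phi$. In the holomorphic setting the Levi polynomial of a strictly convex exhaustion lets one push $f(bD)$ along affine complex hyperplanes; for null curves one must push within the tangent directions allowed by $\Acal$, which constrains the admissible Riemann--Hilbert discs to be null. Reconciling this null constraint with the need to move the boundary out of the cap $C$ in a controlled direction—while not creating uncontrolled real periods that would obstruct integration back to an $\R^n$-valued immersion—is the technical crux, and I expect it to require the full strength of the period-dominating spray constructions for the null quadric. Once that Riemann--Hilbert step and the matching Oka principle for $\Acal$-directed immersions into $\Omega_1$ are in place, the reduction to Theorem \ref{th:main}'s inductive scheme is essentially formal.
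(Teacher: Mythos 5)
Your proposal follows essentially the same route as the paper: the authors obtain Theorem \ref{th:MS} by rerunning the inductive scheme of Section \ref{sec:proof}, replacing Proposition \ref{prop:pushing} by the Riemann--Hilbert deformation method for conformal minimal surfaces (\cite{AlarconDrinovecForstnericLopez2015PLMS}, \cite[Chapter 6]{AlarconForstnericLopez2021}) and the Oka step by the flexibility of complements of closed convex sets for minimal surfaces (\cite[Corollary 1.5]{DrinovecForstneric2023JMAA}), which is precisely the null-quadric and period-dominating-spray machinery you invoke, with the same general position argument giving embeddings for $n\ge 5$. The one step you omit is the preliminary reduction (via Proposition \ref{prop:approximation}, as in the proof of Corollary \ref{cor:main}) from a general convex $\phi\ge 0$ with nonempty compact zero set to a nearby $\psi\le\phi$ whose epigraph has BCEH; without it the exhaustion $E_{k+1}=\Conv(E_0\cup r_k\cB)$ driving the induction need not have bounded increments, since the hypotheses of Theorem \ref{th:MS} allow epigraphs with boundary rays or asymptotes.
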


If in addition $\phi$ is of class $\Cscr^1$, strictly convex at infinity, 
and the epigraph $E_\phi=\{y\ge \phi(x)\}$ has BCEH then one can add 
to this statement the interpolation of the map on discrete sets, in analogy to Theorem \ref{th:interpolation}.

Theorem \ref{th:MS} is obtained by following the proof of Theorem \ref{th:main},
replacing Proposition \ref{prop:pushing} by the analogous result obtained by the 
Riemann--Hilbert deformation method for conformal minimal surfaces 
(see \cite{AlarconDrinovecForstnericLopez2015PLMS} or
\cite[Chapter 6]{AlarconForstnericLopez2021}).
Furthermore, it has recently been shown by the authors 
\cite[Corollary 1.5]{DrinovecForstneric2023JMAA}
that the complement of a closed convex set $E\subset \R^n$ $(n\ge 3)$ is flexible
for minimal surfaces (an analogue of the Oka property in complex geometry)
if and only if $E$ is not a halfspace or a slab; clearly this includes all sets with BCEH.

Another method for constructing proper minimal surfaces, which yields
the same result in some examples not covered by Theorem \ref{th:MS}, 
was developed by Alarc\'on and L\'opez \cite{AlarconLopez2012JDG} in 2012.  
They showed that Theorem \ref{th:MS} holds for any wedge domain 
$\Gamma\times \R\subset\R^3$, where $\Gamma\subset \R^2$ 
is an open cone with angle $>\pi$; see \cite[Theorem 5.6]{AlarconLopez2012JDG}. 
The complement of this set is convex but it fails to satisfy the hypotheses of
Theorem \ref{th:MS} due to the presence of lines in the boundary.
An important difference between these two fields, which affects the possible 
construction methods, is that every open Riemann surface admits a proper harmonic 
map to the plane $\R^2$ (see \cite[Theorem I]{AlarconLopez2012JDG}), while only 
few such surfaces admit proper holomorphic maps to $\C$.

The analogue of Problem \ref{prob:sublinear} for minimal surfaces asks whether there is 
a domain in $\R^3$ of the form $\{x_3 < \phi(x_1,x_2)\}$, where $\phi:\R^2\to \R_+$ 
is a function with sublinear growth, 
which contains minimal surfaces of hyperbolic type that are proper in $\R^3$, 
or just a proper hyperbolic end of a minimal surface. In particular, it would be 
interesting to know whether the domain below the upper half of a vertical catenoid
has this property. 
On the other hand, the strong halfspace theorem of Hoffman and Meeks 
\cite{HoffmanMeeks1990IM} says that the only proper minimal surfaces in $\R^3$ 
contained in a halfspace are planes.

%
%
%
%
\section{Pushing a strongly pseudoconvex boundary out of a strictly convex cap}\label{sec:pushing}

Let $O$ be a convex domain in $\C^n$ for some $n>1$. Recall that a continuous function
$\rho:O\to\R$ is said to be {\em strictly convex} if for any pair of points $a,b\in O$ we
have that 
\[
	\rho(ta+(1-t)b) < t\rho(a)+(1-t)\rho(b)\ \ \text{for all $0<t<1$}.
\]

Assume now that $\rho_t:O\to\R$ $(t\in [0,1])$ is a continuous family 
of $\Cscr^1$ functions satisfying the following conditions:

\begin{enumerate}[\rm (a)]
\item 
For every $t\in [0,1]$ the function $\rho_t$ is strictly convex.
Note that $d\rho_t\ne 0$ on $M_t:=\{\rho_t=0\}$.
\item If $0\le s<t\le 1$ then $\rho_t\le 0$ on $M_s$.
\item There is an open relatively compact subset $\omega_0$ of $M_0$ such that 
for every pair of numbers $0\le s<t\le 1$ we have that 
$ 
	M_t \cap M_0 = M_t\cap M_s = M_0\setminus \omega_0.
$ 
\end{enumerate}

This means that the hypersurfaces $M_t$ coincide on the subset $M_0\setminus \omega_0$,
and as $t\in [0,1]$ increases the domains $\omega_t=M_t\setminus M_0 \subset M_t$ 
are pairwise disjoint and move into the convex direction. Each compact set of the form
\begin{equation}\label{eq:Ct}
	C_t= \bigcup_{s\in [0,t]} \overline \omega_s\  \ \ \text{for $t\in[0,1]$} 
\end{equation}
is called a {\em strictly convex cap with the base} $\omega_0$. 
Note that $bC_t=\overline \omega_0\cup \overline\omega_t$,
$C_t$ is strictly convex along $\omega_t$, strictly concave along $\omega_0$, and it has 
corners along $\overline\omega_0\cap \overline\omega_t$. As $t\in[0,1]$ increases to $1$, 
the caps $C_t$ monotonically increase to $C_1$ and they share the same base $\omega_0$. Likewise, for any $0\le s<t\le 1$ the set $C_{s,t}=\bigcup_{u\in [s,t]} \overline \omega_u$ 
is a strictly convex cap with the base $\omega_s$. The sets
\begin{equation}\label{eq:Et}	
	E_t=\{z\in O:\rho_t(z) \le 0\}\ \ \text{for $t\in [0,1]$} 
\end{equation}
are strictly convex along $bE_t=\{\rho_t=0\}$, 
they form a continuously increasing family in $t$, and 
\[
	E_t = E_0 \cup C_t\ \ \text{for every $t\in [0,1]$}. 
\]
Under these assumptions, we have the following result. 

%
%
\begin{proposition}\label{prop:pushing}
Let $D$ be a smoothly bounded, relatively compact, 
strongly pseudoconvex domain in a Stein manifold $X$ with $\dim X<n$. 
Let the sets $E_t\subset  O\subset\C^n$ $(t\in [0,1])$ be given by \eqref{eq:Et}, 
and let $f_0:\bar D\to  O$ be a map of class $\Ascr(\bar D)$ such that 
$f_0(bD)\cap E_0 = \varnothing$. Given a compact set $K\subset D$ such that 
$f_0(\overline{D\setminus K}) \cap  E_0 = \varnothing$ and 
a number $\epsilon>0$, there is a map $f:\bar D\to O$ of class $\Ascr(\bar D)$ 
satisfying the following conditions:
\begin{enumerate}[\rm (i)]
\item $f(bD)\cap E_1=\varnothing$,
\item $f(\overline{D\setminus K}) \cap  E_0 = \varnothing$, and 
\item $\max_{x\in K}|f(x)-f_0(x)|<\epsilon$.  
\end{enumerate}
\end{proposition}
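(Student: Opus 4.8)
The plan is to push the boundary outward gradually, crossing the cap $C_1=E_1\setminus E_0$ in many thin slices so that each individual push is a small deformation concentrated near $bD$. Fix a partition $0=t_0<t_1<\cdots<t_N=1$ and construct maps $f_0,f_1,\dots,f_N=f$ in $\Ascr(\bar D)$ maintaining the invariants $f_j(bD)\cap E_{t_j}=\varnothing$, $f_j(\overline{D\setminus K})\cap E_0=\varnothing$, and $\max_K|f_j-f_0|<(j/N)\epsilon$; then $f=f_N$ satisfies (i)--(iii). For a fine partition each cap $C_{t_j,t_{j+1}}=E_{t_{j+1}}\setminus E_{t_j}$ is thin, which is what lets the passage from $f_j$ to $f_{j+1}$ preserve the $E_0$-condition.

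For one step I would use the Riemann--Hilbert deformation method (as in \cite{DrinovecForstneric2007DMJ} and \cite[Ch.~6]{AlarconForstnericLopez2021}): attach to each $x\in bD$ a \emph{centered} analytic disc $\zeta\mapsto f_j(x)+\alpha(x,\zeta)$, $\zeta\in\bar\D$, with $\alpha(x,0)=0$ and $\alpha$ continuous in $x$. The centering is exactly what forces the resulting map to stay close to $f_j$ on the fixed interior compact $K$. If the discs are chosen so that every boundary circle $f_j(x)+\alpha(x,b\D)$ avoids $E_{t_{j+1}}$ with a uniform margin, then the Riemann--Hilbert theorem yields $g\in\Ascr(\bar D)$ that is $(\epsilon/N)$-close to $f_j$ on $K$ and whose boundary $g(bD)$ lies so near these circles that $g(bD)\cap E_{t_{j+1}}=\varnothing$. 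Setting $f_{j+1}=g$ then restores the first invariant.

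The discs are built from the strict convexity of the $\rho_t$, and this is the heart of the argument. At every $z\in bE_{t_{j+1}}$ the complex-linear functional $\lambda_z=\partial\rho_{t_{j+1}}(z)$ defines a supporting complex hyperplane $\Sigma_z=\{w:\lambda_z(w-z)=0\}$ meeting $E_{t_{j+1}}$ only in $z$; hence $E_{t_{j+1}}$ has no recession direction tangent to its boundary, and every complex line parallel to $\Sigma_z$ meets $E_{t_{j+1}}$ in a bounded set, of diameter uniformly bounded by some $\Delta$ as $z$ ranges over the relevant compact part of $bE_{t_{j+1}}$. For $x$ with $f_j(x)$ in (a neighbourhood of) the new cap I pick $z=z(x)\in bE_{t_{j+1}}$ near $f_j(x)$ and let the disc sweep tangentially, $\alpha(x,\zeta)=\zeta\,a(x)$ with $\lambda_z(a(x))=0$ and $|a(x)|>\Delta$; for the remaining $x$ I set $\alpha(x,\cdot)\equiv0$, interpolating with a continuous cut-off. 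The boundary circle then lies in the affine hyperplane through $f_j(x)$ parallel to $\Sigma_z$, at tangential distance $|a(x)|>\Delta$ from $f_j(x)$, hence outside $E_{t_{j+1}}$ (and a fortiori outside $E_0$). Note that escape must be tangential: the maximum principle shows a centered disc cannot be pushed bodily to the outer side of a supporting hyperplane, so the push succeeds only because the strictly convex body curves away from $\Sigma_z$.

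The main obstacle is the interaction with the second invariant, namely keeping $g(\overline{D\setminus K})$ out of $E_0$: the escape requirement forces the tangential discs to be large, while their interiors must still avoid $E_0$. When $n\ge3$ the tangent space $\ker\lambda_z$ has complex dimension $n-1\ge2$, and since $f_j(x)\notin E_0$ a separation argument provides a tangential direction $a(x)$ whose whole complex line through $f_j(x)$ misses the bounded convex slice $E_0\cap(f_j(x)+\ker\lambda_z)$; the entire disc then avoids $E_0$, and for a fine partition $a(x)$ can be chosen continuously. The crude Riemann--Hilbert estimate $\dist\big(g(x),\bigcup_{x'}(f_j(x')+\alpha(x',\bar\D))\big)<\epsilon$ on $\bar D$, together with $g\approx f_j$ on a slightly enlarged compact set where $f_j$ avoids $E_0$ with positive margin, then gives $g(\overline{D\setminus K})\cap E_0=\varnothing$. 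In the borderline case $n=2$ one has $\dim X=1$ and $D$ a bordered Riemann surface, and the single tangential direction may not avoid $E_0$; here I would instead invoke the finer Riemann--Hilbert deformation available for bordered Riemann surfaces, which keeps $g$ near the boundary curves of the discs (already disjoint from $E_0$) rather than their filled interiors, using the thinness of the cap. Iterating the step from $t_0$ to $t_N$ yields $f$ with the three required properties.
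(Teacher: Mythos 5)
Your top-level scheme --- a finite induction over a partition $0=t_0<\cdots<t_N=1$, maintaining the three invariants and losing only $\epsilon/N$ on $K$ at each step --- is exactly the paper's (there the partition is extracted by a compactness/covering argument from the constants $\delta_t$ furnished by its one-step lemma, rather than fixed in advance, but that is cosmetic). The genuine gap is in the one-step deformation. You run the entire step on the Riemann--Hilbert method: attach a continuous family of centered analytic discs along $bD$ and invoke an approximate Riemann--Hilbert theorem to produce $g\in\Ascr(\bar D)$ tracking the boundary circles. That tool exists only when $D$ is a bordered Riemann surface, i.e.\ when $\dim X=1$. The proposition is stated for every $\dim X<n$, and for $\dim X\ge 2$ there is no Riemann--Hilbert theorem for families of discs attached along the $(2\dim X-1)$-dimensional boundary $bD$; this is precisely why the paper's Lemma \ref{lem:main} is proved, for $\dim X\ge 2$, by the entirely different Dor--Hakim mechanism of \cite[Lemma 5.3]{DrinovecForstneric2010AJM}: carefully controlled holomorphic peak functions on $\bar D$ pushing $f$ locally, at boundary points, in tangential complex directions, assembled by a finite induction over a covering of $bD$. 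Your case analysis splits on $n$ (the target dimension) rather than on $\dim X$ (the source dimension), which suggests the two have been conflated; the difficulty sits with the source. As written, your argument proves the proposition only for $\dim X=1$.

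Two smaller points. First, your fix for $n=2$ --- ``invoke the finer Riemann--Hilbert deformation \ldots which keeps $g$ near the boundary curves of the discs rather than their filled interiors'' --- overstates what the approximate Riemann--Hilbert lemma delivers: proximity to the boundary circles is guaranteed only for points of $bD$ itself, while on the interior collar $\overline{D\setminus K}$ one only controls proximity to the filled discs. The way the paper keeps the collar out of $E_0$ is different and more robust: the tangential linear discs lie in supporting hyperplanes of the sublevel sets of the strictly convex function, so $\rho\circ g$ does not drop by more than $\eta$ anywhere on $\overline{D\setminus K}$ (condition (iii) of Lemma \ref{lem:main}); no claim about staying near circles in the interior is needed. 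Second, your assertion that every complex line parallel to $\Sigma_z$ meets $E_{t_{j+1}}$ in a set of uniformly bounded diameter is correct only for lines through points sufficiently close to the (compact, relevant) part of $bE_{t_{j+1}}$, since the $E_t$ here are in general unbounded; this is true but needs the exposed-point argument (as in Remark \ref{rem:proper}) rather than strict convexity alone. Neither of these is fatal, but the missing mechanism for $\dim X\ge 2$ is.
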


Recall that a map $f:\bar D\to  O$ 
is said to be of class $\Ascr(\bar D)$ if it is continuous on $\bar D$ and holomorphic on $D$.
In our application of Proposition \ref{prop:pushing} in the proof of Theorem \ref{th:main}, 
the set $O$ will be a ball (or the entire Euclidean space) 
and the hypersurfaces $M_t=\{\rho_t=0\}=bE_t$ will be convex graphs
over the coordinate hyperplane $\C^{n-1}\times \R\subset \C^n$. 

In the proof of Proposition \ref{prop:pushing} we shall need the following lemma. 

%
%
%
%
\begin{lemma} \label{lem:main}
Assume that $O$ is a convex open subset of $\C^n$ for $n>1$, $L$ is a compact 
subset of $O$, and $\rho:O\to \R$ is a $\Cscr^1$ smooth strictly convex function.
Then there is a number $\delta>0$ with the following property.
If $D$ is a smoothly bounded strongly pseudoconvex domain in a Stein manifold 
$X$ of dimension $\dim X =m<n$, $K$ is a compact subset of $D$, and $f: \bar D\to O$ 
is a map of class $\Ascr(\bar D)$ such that  
\begin{equation}\label{eq:aboutf}
  	\rho(f(z))> -\delta\ \ \text{for all $z\in bD$} 
	\quad \text{and}\quad \rho(f(z))>0 \ \ \text{if $z\in bD$ and $f(z)\notin L$}, 
\end{equation}
then given $\eta >0$ there is  a map $g:\bar D\to O$ of class $\Ascr(\bar D)$ 
satisfying the following conditions:
\begin{enumerate}[\rm (i)] 
\item  $\rho(g(z))>0$ for $z\in bD$,
\item  $\rho(g(z))>\delta$ for those $z\in bD$ for which $g(z)\in L$,
\item  $\rho(g(z))>\rho(f(z))-\eta$ for $z\in \overline{D\setminus K}$, and
\item  $|f(z)-g(z)|<\eta$ for $z\in K$. 
\end{enumerate}
\end{lemma}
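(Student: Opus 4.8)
The plan is to deform $f$ by the Riemann--Hilbert boundary value technique for strongly pseudoconvex domains (see \cite{DrinovecForstneric2007DMJ} or \cite[Chapter 6]{AlarconForstnericLopez2021}), attaching to each boundary point $z\in bD$ a small analytic disc that points into the complex tangent hyperplane of a level set of $\rho$ at $f(z)$. The one geometric input is a consequence of strict convexity: if $w\in O$ and $v\in\C^n$ satisfy $\langle \partial\rho(w),v\rangle=0$, then the affine disc $\zeta\mapsto w+s\zeta v$ lies in the complex hyperplane $\Sigma_w=\{\xi:\langle \partial\rho(w),\xi-w\rangle=0\}$, along which the real differential $d\rho_w$ vanishes; hence $\rho(w+s\zeta v)\ge \rho(w)$ for all $|\zeta|\le 1$, with a strict increase when $\zeta\ne 0$. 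Thus the whole disc sits in $\{\rho\ge \rho(w)\}$ and its boundary circle sits strictly higher.

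First I would fix $\delta$, which must depend only on $O,L,\rho,n$. Choose a compact neighbourhood $L'\Subset O$ of $L$ with $L\subset \mathring{L'}$, and a radius $s_0>0$ so small that every disc $w+s_0\zeta v$ with $w\in L'$, $\langle \partial\rho(w),v\rangle=0$, $|v|=1$, $|\zeta|\le 1$ stays in $O$. Set
\[
	\mu=\inf\bigl\{\rho(w+s_0\zeta v)-\rho(w)\ :\ w\in L',\ \langle \partial\rho(w),v\rangle=0,\ |v|=1,\ |\zeta|=1\bigr\}.
\]
By the geometric fact the infimand is positive, and by compactness of the index set $\mu>0$; put $\delta=\mu/4$. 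This number involves only $O,L,\rho,n$, as required.

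Next, given $D,K,f,\eta$, I would build the disc family. After a small general-position perturbation of $f$ (harmless, since $f(bD)$ has real dimension $2m-1<2n$ while $\rho$ has a single critical point) we may assume $\partial\rho\ne 0$ on $f(bD)$, so that $z\mapsto \ker\langle \partial\rho(f(z)),\cdot\rangle$ is a complex vector bundle of rank $n-1$ over $bD$. Its real rank $2(n-1)$ strictly exceeds $\dim_\R bD=2m-1$ \emph{exactly because} $m<n$, so it admits a continuous unit section $v(z)$. Using $f(\bar D)\Subset O$, choose a continuous radius $r:bD\to[0,s_0]$ equal to $s_0$ on $f^{-1}(L')$ and tapering to $0$ where $f(z)$ leaves $L'$, keeping each disc inside $O$, and set $\sigma(z,\zeta)=f(z)+r(z)\zeta\,v(z)$. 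Then $\sigma$ is continuous on $bD\times\bar\D$, holomorphic in $\zeta$, $\sigma(z,0)=f(z)$, one has $\rho\ge \rho(f(z))$ on all of $\sigma(z,\bar\D)$, and the gain on the boundary circle $\sigma(z,b\D)$ is at least $\mu$ whenever $f(z)\in L'$.

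Finally I would apply the Riemann--Hilbert approximation to $\sigma$, producing $g\in\Ascr(\bar D,O)$ with $|g-f|<\epsilon$ on $K$, with $\dist\bigl(g(z),\sigma(z,b\D)\bigr)<\epsilon$ for $z\in bD$, and with $g(x)$ within $\epsilon$ of a disc $\sigma(z',\bar\D)$ for some nearby $z'\in bD$ when $x\in\bar D$. Here $\epsilon$ is fixed at the end, smaller than $\eta$, than $2\delta$, than the distance of the discs to $bO$, and than $\tfrac12\inf\{\rho(f(z)):f(z)\notin\mathring{L'}\}$ (positive by compactness, since $f(z)\notin L$ forces $\rho(f(z))>0$). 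Condition (iv) is then immediate. For (i) and (ii): where $f(z)\notin \mathring{L'}$ the gain is $\ge 0$, so $\rho(g(z))\ge \rho(f(z))-O(\epsilon)>0$; where $f(z)\in L'$ one gets $\rho(g(z))>\rho(f(z))+\mu-O(\epsilon)>-\delta+4\delta-O(\epsilon)>\delta$, and a point $g(z)\in L$ forces $f(z)\in L'$, which yields (ii). For (iii), on $bD$ the nonnegative gain gives $\rho(g)\ge \rho(f)-O(\epsilon)>\rho(f)-\eta$, while at an interior point $x\in\overline{D\setminus K}$ the localized estimate puts $g(x)$ near a disc centred at a nearby $z'$, on which $\rho\ge \rho(f(z'))\ge \rho(f(x))-\eta/2$ by uniform continuity of $\rho\circ f$. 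The main obstacle is precisely this last paragraph: arranging that the convexity gain is uniformly positive on the part of $bD$ where $\rho\circ f$ is small (near $L$) yet harmlessly nonnegative elsewhere, while keeping all discs in $O$, so that the localized Riemann--Hilbert estimates deliver the boundary conditions (i)--(ii) and the interior non-decrease (iii) simultaneously.
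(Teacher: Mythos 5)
Your choice of $\delta$ via the tangential linear discs $\zeta\mapsto w+s_0\zeta v$ with $\langle \partial\rho(w),v\rangle=0$ is exactly the geometric input the paper has in mind: since $d\rho_w$ vanishes on the affine complex line $w+\C v$ and $\rho$ is strictly convex, the disc lies in $\{\rho\ge\rho(w)\}$ with a uniform gain $\mu>0$ on the boundary circle over the compact set $L'$. For $\dim X=m=1$ your argument is essentially the paper's: it reduces the lemma to the approximate solution of a Riemann--Hilbert boundary value problem for a continuous family of tangential discs, as in \cite[Lemmas 6.2 and 6.3]{DrinovecForstneric2007DMJ}, and your bookkeeping of the constants $(\mu,\delta,\epsilon,\eta)$ for conditions (i)--(iv) is sound (for (iii) you should separate the thin collar $A$ of $bD$, where the disc estimate applies, from $\overline{D\setminus K}\setminus A$, where one only has $|g-f|<\epsilon$; both give the claim).

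The genuine gap is the case $m\ge 2$. The step ``apply the Riemann--Hilbert approximation to $\sigma$, producing $g\in\Ascr(\bar D,O)$ with $\dist(g(z),\sigma(z,b\D))<\epsilon$ for $z\in bD$'' is not an available tool when $bD$ is a $(2m-1)$-dimensional strongly pseudoconvex hypersurface: the references you invoke (\cite{DrinovecForstneric2007DMJ}, \cite[Chapter 6]{AlarconForstnericLopez2021}) prove the approximate Riemann--Hilbert lemma only for bordered Riemann surfaces, where $bD$ is a union of circles and one can reparametrize the disc family by powers of a boundary coordinate. No analogue of this lemma is known (and none is claimed in the paper) for higher-dimensional $D$. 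This is precisely why the paper treats $m\ge 2$ by a different mechanism: one covers $bD$ by small open sets carrying good systems of local holomorphic peak functions and, in a finite induction, adds terms of the form $c\,u(z)^N v$ to push $f$ locally in the tangential directions $v$, following \cite[Lemmas 5.2--5.3 and Sublemma 5.4]{DrinovecForstneric2010AJM} (the method of Dor and Hakim). Your construction of the direction field $v(z)$ (using that the rank-$(n-1)$ complex bundle $\ker\langle\partial\rho(f(\cdot)),\cdot\rangle$ has real rank $2n-2>2m-1=\dim_\R bD$) is still useful in that framework, but to close the proof for $m\ge 2$ you must replace the Riemann--Hilbert step by the peak-function induction, including the standard estimates showing that each local push does not decrease $\rho\circ f$ by more than a prescribed amount elsewhere on $\bar D$.
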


For $m=1$, i.e., when $D$ is a finite bordered Riemann surface, 
this is a simplified version of \cite[Lemmas 6.2 and 6.3]{DrinovecForstneric2007DMJ}, 
which is proved by using approximate solutions of a Riemann--Hilbert boundary value problem. 
This method was employed in several earlier papers mentioned in \cite{DrinovecForstneric2007DMJ}.
When $\rho$ is strictly convex, $\Cscr^1$ smoothness suffices since 
in the proof we may take a continuous family of tangential linear discs to the sublevel set of $\rho$.

For $m\ge 2$, Lemma \ref{lem:main} is a simplified and slightly modified
version of \cite[Lemma 5.3]{DrinovecForstneric2010AJM}.
Besides the fact that we are considering single maps $\bar D\to O$ 
instead of sprays of maps, the only difference is that the assumption in 
\cite[Lemma 5.3]{DrinovecForstneric2010AJM} that the set 
$\{\rho=0\}$ is compact is replaced by the assumption \eqref{eq:aboutf} saying that 
$\rho(f(z))$ for $z\in bD$ may be negative only if $f(z)$ lies in the compact set 
$L\subset O$. This hypothesis ensures that the lifting for a relatively big amount 
(the role of the constant $\delta$) only needs to be made on a compact subset of $O$, 
while elsewhere it suffices to pay attention not to decrease $\rho\circ f$ 
by more than a given amount and to approximate sufficiently closely on $K$ 
(the role of the constant $\eta$). The proof requires only a minor adaptation of
\cite[proof of Lemma 5.3]{DrinovecForstneric2010AJM}, using its local version 
\cite[Lemma 5.2]{DrinovecForstneric2010AJM} in a finite induction with respect to a 
covering of $bD$ by small open sets on which there are good systems of local holomorphic 
peak functions. In fact, Lemma \ref{lem:main} corresponds to a simplified version of 
\cite[Sublemma 5.4]{DrinovecForstneric2010AJM}, which explains how to lift the image of 
$bD$ with respect to $\rho$ for a sufficiently large amount at those points in $bD$ which 
the map $f$ sends to a certain coordinate chart $U_i$ in the target manifold. 
In our situation, the role of $U_i$ is played by an open relatively compact neighbourhood
of the set $L\cap\{\rho=0\}$ in $O$, and there is no need to use the rest of the proof of 
\cite[Lemma 5.3]{DrinovecForstneric2010AJM}. 

%
%
\begin{proof}[Proof of Proposition \ref{prop:pushing}]
For $t\in [0,1]$ let $\delta_t>0$ be a number for which the conclusion of 
Lemma \ref{lem:main} holds for the function $\rho_t$ and the compact set $L=C_1$ 
(see \eqref{eq:Ct}). The open sets 
\[
	U_t=\{z\in O: -\delta_t<\rho_t(z)<\delta_t\}\ \ \text{for $t\in [0,1]$}
\]
form an open covering of $C_1$, so there exists a finite subcovering 
$\{U_{t_j}\}$ for $0\le t_1<t_2<\ldots <t_k\le 1$. Applying Lemma \ref{lem:main}
we inductively find maps $f_1,\ldots, f_k\in\Ascr(\bar D)$ such that for every 
$j=1,\ldots,k$ we have that 
\begin{enumerate}[\rm (a)]
\item $f_j(bD)\cap E_{t_j}=\varnothing$ (where $E_t$ is given by \eqref{eq:Et}), 
\item $f_j(\overline{D\setminus K})\cap\overline E_0=\varnothing$, and 
\item $|f_j-f_{j-1}|<\epsilon/k$ on $K$. 
\end{enumerate}
Note that conditions (a) and (b) hold for $f_0$ and (c) is void.
Assume inductively that for some $j\in\{1,\ldots,k\}$ we have maps 
$f_0,\ldots,f_{j-1}$ satisfying these conditions. 
Applying Lemma \ref{lem:main} with $f=f_{j-1}$ and taking $f_j=g$,
condition (a) follows from part (i) in Lemma \ref{lem:main}, (b) follows from (ii) 
provided that the number $\eta>0$ in Lemma \ref{lem:main} is chosen small enough, 
and (c) follows from (iii) in Lemma \ref{lem:main} provided that $\eta\le \epsilon/k$ . 
This gives the map $f_j$ satisfying conditions (a)--(c) and the induction may continue.
The map $f=f_k$ then satisfies the proposition.
\end{proof}

\begin{remark}
Proposition \ref{prop:pushing} also holds, with the same proof, if $\rho_t$ $(t\in[0,1])$ 
are strongly plurisubharmonic functions of class $\Cscr^2$ satisfying $d\rho_t\ne 0$ on
$M_t=\{\rho_t=0\}$. Indeed, the results from \cite{DrinovecForstneric2010AJM},
which are used in the proof, pertain to this case. In the present paper
we shall only use the convex case under $\Cscr^1$ smoothness, which comes
naturally in the construction. 
\end{remark}

%
%
%
%
\section{Closed convex sets with BCEH}\label{sec:BCEH}

In the context of convex analysis, closed unbounded convex sets that share several
important properties with compact convex sets were studied by 
Gale and Klee \cite{GaleKlee1959} in 1959. They introduced the class of continuous sets, 
and we show that this class coincides with the class 
of sets having BCEH, introduced in Definition \ref{def:BCEH}; 
see Proposition \ref{prop:BCEHcontinuous}. We then develop further
properties of these sets which are relevant to the proof of our main theorems.

By a {\em ray} in $\R^n$, we shall mean a closed affine halfline. 
Let $E$ be a closed convex subset of $\R^n$. A \emph{boundary ray} of $E$ 
is a ray contained in the boundary of $E$. An \emph{asymptote} of $E$ is 
a ray $L\subset \R^n\setminus E$ such that $\dist(L,E)=\inf\{|x-y|:x\in L,\ y\in E\}=0$.
The function 
\[
	\sigma: \{u\in\R^n : |u|=1\}\to \R\cup\{+\infty\}, \quad 
	\sigma(u)=\sup\{x\,\cdotp u : x\in E\} 
\]
is called the \emph{the support function} of $E$. (Here, $x\,\cdotp u$ denotes  
the Euclidean inner product.) A closed convex set $E$ is said to be {\em continuous} 
in the sense of Gale and Klee \cite{GaleKlee1959} if the support function of $E$
is continuous. Note that every compact convex set is continuous.

The following result is a part of \cite[Theorem 1.3]{GaleKlee1959} due to 
Gale and Klee; we only list those conditions that will be used. 
The last item (iv) uses also \cite[Theorem 1.5]{GaleKlee1959}. 

%
%
\begin{theorem}\label{th:GaleKlee}
For a closed convex subset $E$ in $\R^n$ the following conditions are equivalent:
\begin{enumerate}[\rm (i)] 
\item $E$ is continuous. 
\item $E$ has no boundary ray nor asymptote. 
\item For each point $p\in \R^n$ the convex hull $\Conv(E\cup \{p\})$ is closed.
\item For every compact convex set $K\subset\R^n$ the set $\Conv(E\cup K)$ 
is closed.
\end{enumerate}
\end{theorem}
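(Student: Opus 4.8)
The plan is to prove the equivalence of the four conditions in Theorem~\ref{th:GaleKlee} by establishing a cycle of implications, most naturally (i)$\Rightarrow$(ii)$\Rightarrow$(iii)$\Rightarrow$(iv)$\Rightarrow$(i), where the hardest link is showing that the absence of boundary rays and asymptotes forces the convex hull with a point to be closed. Throughout I would work with the support function $\sigma(u)=\sup\{x\cdotp u:|u|=1\}$ and exploit the standard duality between a closed convex set and its support function; the key analytic fact is that $\sigma$ is automatically lower semicontinuous as a supremum of the continuous linear functionals $u\mapsto x\cdotp u$, so continuity of $\sigma$ is equivalent to its upper semicontinuity, and failures of continuity occur precisely at directions $u$ where $\sigma$ jumps upward in the limit.

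For (i)$\Rightarrow$(ii) I would argue contrapositively. If $E$ contains a boundary ray $L$ with direction $v$, then the support function is finite in directions $u$ with $u\cdotp v\le 0$ but jumps to $+\infty$ as $u$ rotates so that $u\cdotp v$ becomes positive, so $\sigma$ is discontinuous at the boundary direction $u$ with $u\cdotp v=0$; a similar limiting analysis handles an asymptote $L$, where a sequence of near-tangent directions witnesses an upward jump in $\sigma$. For (ii)$\Rightarrow$(iii), the geometric content is that $\Conv(E\cup\{p\})$ consists of $E$ together with all segments from $p$ to points of $E$, so it can fail to be closed only through an escaping sequence: points $q_k=(1-t_k)p+t_k x_k$ with $x_k\in E$ going to infinity and $t_k\to 1$, whose limit lies outside $E$. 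I would extract a limiting direction $v=\lim x_k/|x_k|$ from the recession cone of $E$ and show that the limit point together with this direction produces either a boundary ray of $E$ or an asymptote—contradicting (ii). This limiting/recession-cone argument is the step I expect to be the main obstacle, since it requires carefully controlling how the segments from $p$ degenerate and matching the limiting ray against the boundary structure of $E$.

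The implication (iii)$\Rightarrow$(iv) is the promotion from single points to compact convex sets and is essentially a compactness-plus-induction argument: a compact convex $K$ is the closed convex hull of its extreme points (or, more simply, one writes $\Conv(E\cup K)=\bigcup_{p\in K}\Conv(E\cup\{p\})$ and shows this union is closed using that $K$ is compact, so any escaping sequence in the union has its base points $p_k$ converging in $K$, reducing to the single-point case). Finally, (iv)$\Rightarrow$(i): if $\sigma$ were discontinuous at some direction $u_0$, the resulting upward jump would manifest as a supporting hyperplane that fails to close off $\Conv(E\cup K)$ for a suitably chosen compact $K$ (for instance a small ball placed so that the jump direction forces a non-closed hull), giving the contrapositive.

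Since the statement merely quotes \cite[Theorems 1.3 and 1.5]{GaleKlee1959}, in the paper itself one may legitimately cite Gale and Klee rather than reproduce the full cycle; but the proof sketch above indicates how each link goes, and it isolates the recession-cone analysis in (ii)$\Rightarrow$(iii) as the substantive geometric core, with the remaining implications being either contrapositive support-function estimates or routine compactness reductions. I would present the boundary-ray and asymptote cases of (i)$\Rightarrow$(ii) and the degeneration analysis of (ii)$\Rightarrow$(iii) in full, and treat (iii)$\Rightarrow$(iv) and (iv)$\Rightarrow$(i) more briefly.
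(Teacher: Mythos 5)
The paper does not prove this statement at all: it is imported verbatim (in part) from Gale and Klee, with the sentence ``The following result is a part of \cite[Theorem 1.3]{GaleKlee1959}\dots The last item (iv) uses also \cite[Theorem 1.5]{GaleKlee1959}.'' You anticipate this correctly in your final paragraph, so there is no proof in the paper to compare your route against; the only question is whether your sketch would stand on its own.

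As an outline it is the standard one and every implication you assert is true, but the step you yourself single out as the geometric core, (ii)$\Rightarrow$(iii), is asserted rather than proved, and it is precisely where a naive execution stalls. What you can get cheaply from an escaping sequence $q_k=(1-t_k)p+t_kx_k\to q\notin\Conv(E\cup\{p\})$ with $|x_k|\to\infty$, $t_k\to 0$ and $v=\lim x_k/|x_k|$ in the recession cone is: (a) the ray $R=\{q+\tau v:\tau\ge 0\}$ is disjoint from $E$, because if $q+\tau_0v\in E$ then $q=\frac{\tau_0}{s+\tau_0}\,p+\frac{s}{s+\tau_0}\,(q+\tau_0v)$ (where $q=p+sv$) would lie in $\Conv(E\cup\{p\})$; and (b) $R$ stays within distance $\dist(p,E)$ of $E$, since each $q+\tau v$ is a limit of points $(1-\mu)p+\mu x_k$ with $\mu\to 0$, and such a point is within $(1-\mu)\dist(p,E)$ of $(1-\mu)e+\mu x_k\in E$. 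The missing mechanism that turns this into a boundary ray or asymptote is the convexity of $\tau\mapsto\dist(q+\tau v,E)$: being nonnegative, convex and bounded it is non-increasing with limit $c\ge 0$; translating $R$ by the limiting displacement vector of length $c$ toward $E$ yields a ray at zero distance from $E$ which is either an asymptote or, if it eventually enters $E$, a boundary ray (it cannot meet the interior, since $R$ itself keeps distance $\ge c$ from $E$). Without something of this sort, ``produces either a boundary ray or an asymptote'' is a restatement of the goal. A smaller inaccuracy: in (i)$\Rightarrow$(ii) the support function need not be finite on all of $\{u:u\cdot v\le 0\}$; what you need, and what is true, is that the outward normal $u_0$ of a supporting hyperplane at an interior point of the boundary ray satisfies $u_0\cdot v=0$ and $\sigma(u_0)<\infty$, while $\sigma\equiv+\infty$ on nearby directions with $u\cdot v>0$. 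The reductions (iii)$\Rightarrow$(iv) (compactness of the base points) and (iv)$\Rightarrow$(i) are fine as sketched, though for the purposes of this paper the honest course is the one the authors take: cite Gale and Klee.
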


Condition (iii) implies that the closed convex hull $\overline{\Conv}(E\cup\{p\})$ 
is the union of the line segments connecting $p$ to the points in $E$. 
It also shows that an unbounded continuous closed convex subset $E$ of $\R^n$ 
is not contained in any affine hyperplane. 

Let us record the following observation which will be used in the sequel.

%
%
\begin{lemma}\label{lem:L}
Let $E\subset \R^n$ be a closed convex set, $p\in \R^n\setminus E$,
and $L \subset \R^n$ be an affine subspace containing $p$. Then,
$\Conv(E\cup\{p\}) \cap L =\Conv((E\cap L)\cup \{p\})$.
\end{lemma}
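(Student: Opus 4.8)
The plan is to prove the two inclusions separately, with the nontrivial content lying in $\subseteq$. The inclusion $\Conv((E\cap L)\cup\{p\})\subseteq \Conv(E\cup\{p\})\cap L$ is formal: since $(E\cap L)\cup\{p\}\subseteq E\cup\{p\}$, taking convex hulls gives containment in $\Conv(E\cup\{p\})$; and since $(E\cap L)\cup\{p\}\subseteq L$ with $L$ convex (being affine), its convex hull lies in $L$ as well. Intersecting yields the claim.

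For the reverse inclusion I would first record the standard description of the convex hull of the union of a convex set with a point. Because $E$ is convex, every point of $\Conv(E\cup\{p\})$ lies on a segment from $p$ to a point of $E$; that is,
\[
	\Conv(E\cup\{p\})=\{(1-t)q+tp : q\in E,\ t\in[0,1]\}.
\]
Indeed, any convex combination of finitely many points of $E$ together with $p$ can be rewritten in this form by collapsing the $E$-part to a single point of $E$ using convexity (closedness of $E$ plays no role here).

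The crux is then a short affine-geometry argument. Take $x\in\Conv(E\cup\{p\})\cap L$ and write $x=(1-t)q+tp$ with $q\in E$ and $t\in[0,1]$. If $t=1$ then $x=p\in(E\cap L)\cup\{p\}$ and there is nothing to prove. If $t<1$, solve for $q$:
\[
	q=\frac{1}{1-t}\,x-\frac{t}{1-t}\,p.
\]
The coefficients $\tfrac{1}{1-t}$ and $-\tfrac{t}{1-t}$ sum to $1$, so $q$ is an \emph{affine} combination of $x$ and $p$. Since $x\in L$, $p\in L$, and $L$ is an affine subspace (hence closed under affine combinations), we conclude $q\in L$, whence $q\in E\cap L$. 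Therefore the representation $x=(1-t)q+tp$ exhibits $x$ as a point of $\Conv((E\cap L)\cup\{p\})$, which completes the inclusion and the proof.

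I expect the only subtle point to be the passage from $x\in L$ to $q\in L$: the key observation is that $q$ lies on the line through $p$ and $x$ (it is the affine extension of that segment beyond $x$), so its membership in $L$ is forced precisely because $L$ is affine and already contains both $p$ and $x$. Everything else is routine.
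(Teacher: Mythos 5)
Your proof is correct and follows essentially the same route as the paper's: the easy inclusion is formal, and for the converse one uses convexity of $E$ to write each point of $\Conv(E\cup\{p\})$ as a point on a segment from $p$ to some $q\in E$, then observes that if the point lies in $L$ and differs from $p$, the endpoint $q$ is forced into $L$ as well. Your write-up merely makes explicit the affine-combination computation that the paper leaves implicit.
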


\begin{proof}
Set $E'=E\cap L$. It is obvious that 
$\Conv(E'\cup \{p\}) \subset \Conv(E\cup\{p\})\cap L$. Conversely,
since $E$ is convex, every point $q\in \Conv(E\cup\{p\})$ belongs to a line segment 
from $p$ to a point $q'\in E$. If in addition $q\in L$ and $q\ne p$ then 
$q'\in E'$, and hence $q\in \Conv(E' \cup \{p\})$.
\end{proof}

%
%
\begin{proposition}\label{prop:BCEHcontinuous}
A closed convex set $E\subset \R^n$ has BCEH if and only if it is continuous
in the sense of Gale and Klee \cite{GaleKlee1959}.
\end{proposition}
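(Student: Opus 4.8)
The plan is to derive the equivalence from the Gale--Klee characterisation recorded in Theorem \ref{th:GaleKlee}, proving the two implications separately and by contraposition. Throughout I use that for convex sets $E$ and $K$ one has $\Conv(E\cup K)=\{(1-s)e+sk: e\in E,\ k\in K,\ s\in[0,1]\}$, so a point of $h(E,K)$ is a convex combination of a point of $E$ and a point of $K$. Recall also that $E$ is continuous precisely when its support function $\sigma$ is continuous, and that $\sigma$ is always lower semicontinuous, being a supremum of linear functions.

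For the implication that continuity implies BCEH I argue directly with $\sigma$ and prove the contrapositive. Suppose BCEH fails, so there are a compact convex $K$ and points $q_j\in\Conv(E\cup K)\setminus E$ with $|q_j|\to\infty$; write $q_j=(1-s_j)e_j+s_jk_j$. Since $K$ is bounded we get $|e_j|\to\infty$, and after passing to a subsequence $q_j/|q_j|\to w$, a unit recession direction of $E$. Because $q_j\notin E$, strict separation gives unit vectors $u_j$ with $q_j\cdot u_j>\sigma(u_j)\ge e_j\cdot u_j$; a short computation using $q_j=(1-s_j)e_j+s_jk_j$ then forces $k_j\cdot u_j>e_j\cdot u_j$ and hence $q_j\cdot u_j\le k_j\cdot u_j\le\max_{x\in K}|x|=:M$, so $\sigma(u_j)<M$. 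Passing to a further subsequence $u_j\to u_*$, lower semicontinuity yields $\sigma(u_*)\le M<\infty$, while $(q_j/|q_j|)\cdot u_j\to 0$ gives $w\cdot u_*=0$. Since $w\in\mathrm{rec}(E)$ one has $\sigma(u)=+\infty$ whenever $u\cdot w>0$; applying this to the directions $u_*+\epsilon w$ as $\epsilon\downarrow 0$ exhibits a sequence tending to $u_*$ along which $\sigma\equiv+\infty$, so $\sigma$ is discontinuous at $u_*$ and $E$ is not continuous.

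For the converse I again argue by contraposition: if $E$ is not continuous then, by Theorem \ref{th:GaleKlee}, it has a boundary ray or an asymptote, and in either case I produce a single point $p$ for which $h(E,\{p\})$ is unbounded. If $R=\{a+tv:t\ge 0\}$ is a boundary ray, I first locate a supporting hyperplane $\{x\cdot u=c\}$ of $E$ containing all of $R$ (equivalently $v\cdot u=0$); such a $u$ is found in the normal cone at an interior point of $R$, where the inequalities coming from $a\in E$ and $a+2v\in E$ pin $v\cdot u$ to $0$. Taking $p$ with $p\cdot u>c$, every point $(1-\lambda)p+\lambda(a+tv)$ with $0\le\lambda<1$ has $u$-coordinate strictly larger than $c$, hence lies outside $E$, and for fixed $\lambda$ it escapes to infinity as $t\to\infty$; this gives the unbounded hull.

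The asymptote case is the step I expect to be the main obstacle, since a point close to an asymptote $L=\{b+tv:t\ge0\}$ need not lie outside $E$. The remedy is to exploit convexity of $d(t)=\dist(b+tv,E)$, a convex, strictly positive function on $[0,\infty)$ with $\inf d=0$ attained only in the limit $t\to\infty$ (attainment at a finite $t$ would place a point of $L$ in $E$); hence $d$ is eventually non-increasing with $d(t)\downarrow 0$. Choosing $t_j\to\infty$, letting $x_j=b+t_jv$ and $e_j\in E$ a nearest point of $E$ to $x_j$ (so $|e_j-x_j|=d(t_j)$), and fixing $\lambda\in(0,1)$, I consider $m_j=(1-\lambda)b+\lambda e_j\in\Conv(E\cup\{b\})$. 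Then $m_j$ lies within $\lambda d(t_j)$ of the point $y_j=b+\lambda t_jv\in L$, while monotonicity of $d$ on the relevant range gives $\dist(y_j,E)=d(\lambda t_j)\ge d(t_j)>\lambda d(t_j)$; thus $m_j\notin E$, and since $|m_j|\to\infty$ the hull $h(E,\{b\})$ is unbounded. Together with the boundary-ray case this shows that failure of continuity forces failure of BCEH, completing the proof.
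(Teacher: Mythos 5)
Your proof is correct. For the direction ``not continuous $\Rightarrow$ not BCEH'' you follow the same basic strategy as the paper---produce a single point whose exhaustion hull $h(E,\{p\})$ is unbounded, starting from a boundary ray or an asymptote---but you treat the two cases separately and in more detail: the supporting-hyperplane computation pinning $v\cdot u=0$ in the boundary-ray case, and especially the asymptote case, where convexity and monotonicity of $t\mapsto\dist(b+tv,E)$ certify that the points $(1-\lambda)b+\lambda e_j$ lie outside $E$, a point the paper's argument (rays from $p$ to points of $E$ approaching $L$) passes over rather quickly. The genuine divergence is in the direction ``continuous $\Rightarrow$ BCEH''. The paper argues by contradiction using the Gale--Klee closedness of $\Conv(E\cup\{b\})$ together with the slicing identity of Lemma \ref{lem:L}: from an escaping sequence in $h(E,B)$ it manufactures a ray inside $\Conv(E\cup\{b\})$, locates a subray in the interior of $E$, and contradicts Lemma \ref{lem:L} along the approximating rays $L_m$. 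You instead work directly with the support function: extract a recession direction $w$ from the escaping sequence, produce separating unit vectors $u_j\to u_*$ along which $\sigma$ stays bounded, and exhibit $\sigma\equiv+\infty$ on the directions $u_*+\epsilon w$ approaching $u_*$, an explicit discontinuity. This bypasses Lemma \ref{lem:L} and items (iii)--(iv) of Theorem \ref{th:GaleKlee} entirely, using only lower semicontinuity and positive homogeneity of $\sigma$ (the latter is needed to renormalize $u_*+\epsilon w$ back to the unit sphere, a detail worth stating explicitly, as is the trivial lower bound $q_j\cdot u_j\ge x_0\cdot u_j$ for a fixed $x_0\in E$ that makes $(q_j/|q_j|)\cdot u_j\to 0$ legitimate). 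Both routes are of comparable length; yours has the advantage of appealing only to the definition of continuity for this direction, while the paper's stays within the hull-geometry language it reuses elsewhere in Section \ref{sec:BCEH}.
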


\begin{proof}
Since all closed bounded convex sets have BCEH and are continuous, 
it suffices to consider the case when the set $E$ is unbounded.

If $E$ is not continuous then by Theorem \ref{th:GaleKlee} it has a boundary ray or 
an asymptote. Denote it by $L$, and let $\ell$ be the affine line containing $L$. 
Pick any affine $2$-plane $H\subset\R^n$ containing $\ell$. 
There is a point $p\in H\setminus (\ell\cup E)$.
By considering rays from $p$ to points $q\in E$ approaching $L$ and going to infinity
(if $L$ is a boundary ray, we can choose points $q\in L$), we see that the closure 
of the set $h(E,p)=\Conv(E \cup\{p\})\setminus E$ contains the parallel translate 
$L' \subset H^+$ of $L$ passing through $p$, so $h(E,p)$ is unbounded and hence
$E$ does not have BCEH.

Assume now that $E$ is a continuous and let us prove that it has BCEH. 
We need to show that for any closed ball $B\subset\R^n$ the set 
$h(E,B)=\Conv(E\cup B)\setminus E$ is bounded. Assume to the contrary that there is 
a sequence $x_m\in h(E,B)$ with $|x_m|\to\infty$ as $m\to\infty$. 
Since the sets $E$ and $B$ are convex, we have that  
\[
	\text{$x_m=t_mb_m+(1-t_m) e_m$\ \ for $t_m\in [0,1]$, 
	$b_m\in B$, $e_m\in E$, and $m\in\N$.} 
\]
 Note that $(1-t_m)|e_m|\to\infty$ as $m\to\infty$. By compactness of the respective sets 
we may assume, passing to a subsequence, that $e_m\ne 0$ for all $m$
and the sequences $t_m$, $b_m$, and $\frac 1 {|e_m|}e_m$ are convergent. 
Denote their respective limits by $t$, $b$, and $f$.
We have that
\[
	x_m=t_mb_m+(1-t_m) e_m
	=b_m+(1-t_m) |e_m|\left( \frac{e_m}{|e_m|} -\frac{b_m}{|e_m|}\right)
	=b_m+(1-t_m) |e_m| f_m 
\]
where $f_m=\bigl( \frac{e_m}{|e_m|} -\frac{b_m}{|e_m|}\bigr)$. 
Note that $\lim_{m\to\infty}f_m=f$. Pick a number $\alpha\ge 0$ and set $p=b+\alpha f$. 
If $m$ is large enough then $(1-t_m) |e_m|>\alpha$, so the point 
$
	y_m=b_m+\alpha f_m
$ 
lies on the line segment connecting $b_m$ and $x_m$. 
Since $x_m\in \Conv(E\cup \{b_m\})$, it follows that $y_m \in \Conv(E\cup \{b_m\})$. 
Note that the sequence $y_m$ converges to $p$. Since $E$ is continuous, 
$\Conv(E\cup\{b\})$ is closed by Theorem \ref{th:GaleKlee}, so 
$p=\lim_{m\to\infty}y_m \in\Conv(E\cup\{b\})$. Since this holds for every $\alpha\ge 0$, 
the ray $L=\{b+\alpha f : \alpha\in[0,\infty)\}$ lies in $\Conv(E\cup\{b\})$.
By Lemma \ref{lem:L} there is 
$\alpha_0\in[0,\infty)$ such that the ray $L'=\{b+\alpha f : \alpha\ge \alpha_0\}$ lies in $E$. 
Since $E$ is continuous, $L$ is not a boundary ray of $E$ by Theorem \ref{th:GaleKlee},
thus $L$ contains a point $q=b+\alpha_1 f \in E\setminus bE$ 
for some $\alpha_1 \ge \alpha_0$. Choose a neighbourhood $U_q\subset E$ of $q$.
For any large enough $m$ we then have $p_m:=b_m+\alpha_1 f_m \in U_q$.
Let $L_m=\{b_m+\alpha f_m : \alpha\ge 0\}$. Note that 
$L_m\cap \Conv(E\cup\{b_m\}) = \Conv((L_m\cap E)\cup\{b_m\})$ by Lemma \ref{lem:L}.
However, for $m$ large enough the point $x_m\in L_m$ lies on 
the opposite side of $p_m$ than $b_m$, so $x_m$ belongs to $L_m\cap \Conv(E\cup\{b_m\})$ 
but not to $\Conv((L_m\cap E)\cup\{b_m\})$. This contradiction proves that $E$ has BCEH.
\end{proof}

Given a function $\phi:\R^{n-1}\to\R$, the {\em epigraph} of $\phi$ is the set 
\begin{equation}\label{eq:epi-phi}
	E=E_\phi=\{(x,y)\in\R^{n-1}\times \R : y \ge \phi(x)\}.
\end{equation} 
Note that a function is convex if and only if its epigraph is convex.

%
%
\begin{proposition}\label{prop:epigraph}
If $E\subsetneq \R^n$ is a closed unbounded convex set with BCEH then
\begin{enumerate}[\rm (i)]
\item $E$ does not contain any affine real line, and 
\item for every affine line $\ell$ intersecting $E$ in a ray and any hyperplane $H$
transverse to $\ell$, $E$ is the epigraph of a convex function on $H$.
In particular, there are affine coordinates $(x,y)$ on $\R^n$ in which $E$ is of the form
\eqref{eq:epi-phi} for a convex function $\phi:\R^{n-1}\to \R_+$ satisfying
\begin{equation}\label{eq:proper}
	\liminf_{|x|\to+\infty} \frac{\phi(x)}{|x|} >0.
\end{equation}
\end{enumerate}
\end{proposition}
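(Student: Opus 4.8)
The plan is to run everything through Proposition~\ref{prop:BCEHcontinuous}, which lets me replace ``BCEH'' by ``continuous in the sense of Gale and Klee'' and thus invoke Theorem~\ref{th:GaleKlee}: a BCEH set has \emph{no boundary ray and no asymptote}, and (by the remark after that theorem) an unbounded one lies in no affine hyperplane. For (i), suppose $E$ contains an affine line $\ell$ with direction $v\neq 0$. A standard limiting argument (let points of $\ell\cap E$ run to infinity, then use closedness and convexity) shows $v$ is a two-sided recession direction, i.e.\ $E+\R v=E$, so $E$ is a cylinder over $v$. As $E\subsetneq\R^n$ is closed and nonempty, $bE\neq\varnothing$, and the cylinder property passes to the boundary, so $q+\R v\subseteq bE$ for any $q\in bE$. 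This is a boundary ray, contradicting Theorem~\ref{th:GaleKlee}. Hence $E$ is line-free, equivalently its recession cone $\mathrm{rec}(E)=\{w:\ x+tw\in E\ \text{for all }x\in E,\ t\ge 0\}$ is a \emph{pointed} closed convex cone; it is nontrivial since $E$ is unbounded.

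For the epigraph representation in (ii), fix any $v\in\mathrm{rec}(E)\setminus\{0\}$: the line $\ell$ through a point of $E$ in direction $v$ meets $E$ in a ray (not a line, by (i)), so the lines of the hypothesis are exactly those in recession directions. Given a transverse hyperplane $H$, take coordinates $(x,y)\in H\times\R$ with $v=e_n$, and let $\pi(x,y)=x$. Since $v\in\mathrm{rec}(E)$ and $E$ is closed and line-free, each slice $E\cap\pi^{-1}(x)$ is empty or a closed upward ray $\{x\}\times[\phi(x),\infty)$; this defines $\phi$ on $\pi(E)$, convex because $E$ is. What remains is to show $\pi(E)=H$, for then $E=E_\phi$ with $\phi\colon H\to\R$ finite and convex.

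I expect $\pi(E)=H$ to be the main obstacle, and it is precisely where continuity is used. Suppose $\pi(E)\neq H$. If $\pi(E)$ is not closed, choose $\bar q\in\overline{\pi(E)}\setminus\pi(E)$; then $\pi^{-1}(\bar q)$ misses $E$, while points $x_k\in\pi(E)$, $x_k\to\bar q$, lift (raising heights to be $\ge 0$, which is legitimate since $e_n\in\mathrm{rec}(E)$) to points of $E$ whose distance to the ray $\{\bar q\}\times[0,\infty)$ tends to $0$; that ray is then an asymptote, a contradiction. If $\pi(E)$ is closed but proper, take a supporting hyperplane of $\pi(E)$ in $H$ at a boundary point $\bar q$; the associated ``vertical'' hyperplane supports $E$ and contains the ray $\{\bar q\}\times[\phi(\bar q),\infty)\subseteq E$, a boundary ray, again a contradiction. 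Thus $\pi(E)=H$, proving the first assertion of (ii) for every admissible $\ell$ and $H$.

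For the growth statement and $\phi\ge 0$ the key is to choose the coordinates well rather than to argue for arbitrary $v,H$. Since $\mathrm{rec}(E)$ is pointed, its dual cone has nonempty interior; pick $u$ there, so $\mathrm{rec}(E)\setminus\{0\}\subseteq\{\xi:\,u\cdot\xi>0\}$ and $H_0=\{u=0\}$ meets $\mathrm{rec}(E)$ only at $0$. Take $H$ a translate of $H_0$ and $v\in\mathrm{rec}(E)$ as the $y$-axis (then $u\cdot v>0$, so $H$ is transverse); the previous step gives $E=E_\phi$ with $\phi$ finite and convex. Now $\mathrm{rec}(E)=\{(d,s):\,s\ge\phi_\infty(d)\}$ for the recession function $\phi_\infty(d)=\lim_{t\to\infty}\phi(td)/t$, and $\mathrm{rec}(E)\cap\{y=0\}=\{0\}$ says exactly $\phi_\infty(d)>0$ for all $d\neq 0$. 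Since $\phi_\infty$ is continuous and positively homogeneous, compactness of the unit sphere gives $\alpha:=\min_{|d|=1}\phi_\infty(d)>0$, hence $\phi_\infty(d)\ge\alpha|d|$. Using that $\phi_\infty$ is the support function of $\mathrm{dom}\,\phi^*$, this forces $\{|\xi|\le\alpha\}\subseteq\mathrm{dom}\,\phi^*$, whence $\phi(x)\ge\alpha'|x|-C$ for each $\alpha'<\alpha$. This yields \eqref{eq:proper}; moreover $\phi$ is then coercive, so bounded below, and after translating the $y$-coordinate by $\min\phi$ we may assume $\phi\ge 0$. Once the coordinates are adapted to the pointed recession cone the growth estimate is routine convex analysis, so the genuinely delicate point is the finiteness statement $\pi(E)=H$.
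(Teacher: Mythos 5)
Your proof is correct, but it takes a genuinely different route from the paper's. You funnel everything through Proposition \ref{prop:BCEHcontinuous} and the Gale--Klee characterization (no boundary ray, no asymptote): in (i) a line in $E$ gives a lineality direction, hence a cylinder whose nonempty boundary contains a boundary ray; in (ii) the surjectivity $\pi(E)=H$ is obtained by producing an asymptote when $\pi(E)$ fails to be closed and a boundary ray when it is closed but proper; and the linear growth \eqref{eq:proper} comes from pointedness of the recession cone, a well-chosen $u$ in the interior of its dual, and the identification of the recession function with the support function of $\mathrm{dom}\,\phi^*$. The paper instead argues directly from Definition \ref{def:BCEH}: a line in $E$, a sequence violating \eqref{eq:proper}, or a translate of $\ell$ meeting $E$ in a bounded set would each produce, by an explicit geometric picture, an unbounded subset of $h(E,p)$ for a suitable point $p$, using from Theorem \ref{th:GaleKlee} only the closedness of $\Conv(E\cup\{p\})$. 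Your version makes the structural content (pointed recession cone, dual cone with nonempty interior) more visible and reuses the continuity characterization systematically, at the cost of importing recession-function and conjugate-duality machinery; the paper's argument is more elementary and self-contained. Two small imprecisions that do not affect correctness: $\phi_\infty$ need only be lower semicontinuous (it may take the value $+\infty$), which still yields $\alpha=\min_{|d|=1}\phi_\infty(d)>0$; and $\phi_\infty\ge\alpha|\cdot|$ only places the ball of radius $\alpha$ in the \emph{closure} of $\mathrm{dom}\,\phi^*$, which is precisely why you need $\alpha'<\alpha$ in the final estimate, as you do.
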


The condition \eqref{eq:proper} says that $\phi$ grows at least linearly at infinity.
We show in Example \ref{ex:lineargrowth} that linear growth is possible.

\begin{proof}
\rm (i)
Assume that $\ell\subset E$ is an affine line and let us prove that $E$
does not have BCEH. Since $E$ is a proper subset of $\R^n$,
there is a parallel translate $\ell'$ of $\ell$ which is not contained in $E$, 
and hence $\ell'\setminus E$ contains a ray $L$. Let $p$ be the endpoint of $L$,
and let $p'\in L$ be an arbitrary other point. Since $E\cap L=\varnothing$,
there is a ball $B$ around $p'$ such that $\Conv(B\cup\{p\}) \cap E=\varnothing$.
Clearly, there is a point $q\in B$ such that the ray $L_q$ with the endpoint $p$ and 
containing $q$ intersects the line $\ell$, so the line segment from $p$ to $q$
belongs to $\Conv(E\cup\{p\})\setminus E = h(E,p)$. By moving $p'\in L$ to infinity
we see that $h(E,p)$ is unbounded, so $E$ does not have BCEH.

\rm (ii) 
Since $E$ is unbounded, it contains a ray $L$. Denote by $\ell$ the affine line 
containing $L$. Let $\ell'$ be any parallel translate of $\ell$. Since $E$ contains no affine lines
by part (i), there is a point $p \in \ell'\setminus E$. 
The closed convex hull of the union of $L$ and $p$ contains 
the parallel translate $L'\subset \ell'$ of $L$ passing through $p$.
Since $E$ has BCEH, we conclude that $L'\subset \Conv(E\cup\{p\})$ and 
$L'\setminus E$ is bounded. Since $E\cap L'$ is convex, 
$L'\cap E$ is a closed ray with the endpoint on $bE$.
This shows that $E$ is a union of closed rays contained in parallel translates of the line $\ell$,
so it is an epigraph of a convex function defined on any hyperplane $H\subset\R^n$
transverse to $\ell$. Choosing $H$ such that $H\cap E=\varnothing$ there are  
affine coordinates $(x,y)$ on $\R^n$ with $H=\{y=0\}$ and $\ell=\{x=0\}$. 
In these coordinates, $E$ is of the form \eqref{eq:epi-phi} for a positive convex function $\phi$. 

Finally, if condition \eqref{eq:proper} fails then there is a sequence $(x_k,y_k)\in E$ with
$|x_k|\to+\infty $ and $y_k/|x_k|\to 0$ as $k\to \infty$. The union of the 
line segments $L_k$ connecting $p=(0,-1)\in\R^{n-1}\times\R$ to $(x_k,y_k)$, 
intersected with the lower halfspace $y\le 0$, is then an unbounded
subset of $h(E,p)=\Conv(E\cup \{p\})\setminus E$, contradicting the assumption 
that $E$ has BCEH. 
\end{proof}

%
%
\begin{remark}\label{rem:proper}
The growth condition \eqref{eq:proper} for an epigraph 
can always be achieved in suitable linear coordinates 
(even without the BCEH property) if there is a supporting hyperplane $H\subset \R^n$ 
for $E$ such that the set $E\cap H$ is nonempty and compact. Indeed, we may then choose
coordinates $(x,y)$ on $\R^n$ such that $H=\{y=0\}$, $E\subset \{y\ge 0\}$,
and $0\in E$. If the condition \eqref{eq:proper} fails, 
there is a sequence $(x_k,y_k)\in E$ with $|x_k|\to+\infty $ and $y_k/|x_k|\to 0$ 
as $k\to \infty$. After passing to a subsequence, a ray in $E\cap H$ lies
in the closure of the union of the line segments $L_k\subset E$ connecting the origin to 
$(x_k,y_k)$, contradicting the assumption that the latter set is compact. 
\end{remark}

%
%
\begin{corollary}\label{cor:Oka}
If $E$ is a closed convex set in $\C^n$ $(n>1)$ having BCEH 
then $\C^n\setminus E$ is Oka. 
\end{corollary}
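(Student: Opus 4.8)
The plan is to obtain the corollary as a direct application of the Oka-complement theorem of Wold and the second author, \cite[Theorem 1.8]{ForstnericWold2022Oka}, which asserts that the complement $\C^n\setminus E$ of a closed convex set $E\subsetneq\C^n$ $(n>1)$ is Oka as soon as $E$ is nondegenerate at infinity, i.e.\ $E$ contains no real affine hyperplane (equivalently, $E$ is neither a hyperplane, a halfspace, a slab, nor all of $\C^n$). All the Oka-theoretic content is thereby imported, and the only thing left to verify is that a closed convex set with BCEH satisfies this hypothesis; for that I would appeal to the structure theory developed earlier in this section.

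First I would remove the trivial case $E=\C^n$, for which $\C^n\setminus E=\varnothing$ and there is nothing to prove, and assume henceforth $E\subsetneq\C^n$. If $E$ is bounded it is a compact convex set, so it manifestly contains no affine real line and, a fortiori, no real hyperplane; \cite[Theorem 1.8]{ForstnericWold2022Oka} then applies immediately. (This recovers the compact convex case already used in \cite{ForstnericRitter2014}.) The substantive case is that $E$ is unbounded, and here the key input is Proposition \ref{prop:epigraph}(i): a closed unbounded convex proper subset of $\C^n$ with BCEH contains no affine real line. Since every real affine hyperplane of $\C^n\cong\R^{2n}$ has real dimension $2n-1\ge 3$ and hence contains lines, line-freeness forces $E$ to contain no real hyperplane; in particular $E$ is none of the degenerate sets excluded above. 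Equivalently, one may phrase this via Proposition \ref{prop:epigraph}(ii), which puts $E$, in suitable affine complex coordinates, into the epigraph form \eqref{eq:EinCn} with $\phi\ge 0$ convex of at least linear growth, a presentation lying squarely in the class covered by \cite[Theorem 1.8]{ForstnericWold2022Oka}.

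With the hypothesis verified in every case, \cite[Theorem 1.8]{ForstnericWold2022Oka} yields that $\C^n\setminus E$ is Oka, which is the assertion of the corollary. (Should the definition of an Oka domain require connectedness, this too is automatic: a convex set containing no affine line has connected complement, the convex sets with disconnected complement---namely slabs and hyperplanes---all containing lines.) The main obstacle is not mathematical depth but bookkeeping: one must match the precise hypotheses of \cite[Theorem 1.8]{ForstnericWold2022Oka} against the conclusions of Propositions \ref{prop:BCEHcontinuous} and \ref{prop:epigraph}, and take care with the two edge cases $E=\C^n$ and $E$ bounded, to which Proposition \ref{prop:epigraph}, stated only for unbounded $E$, does not directly apply. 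Once these are handled, the corollary is a one-line consequence of the cited Oka result together with the line-freeness established above.
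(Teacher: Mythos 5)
Your proof is correct and takes essentially the same route as the paper: Proposition \ref{prop:epigraph}(i) supplies that $E$ contains no affine real line, and \cite[Theorem 1.8]{ForstnericWold2022Oka} then gives that $\C^n\setminus E$ is Oka, your treatment of the edge cases $E$ bounded and $E=\C^n$ being a minor (if welcome) addition of care. The only caveat is that the paper applies the cited theorem under the hypothesis that $E$ contains no affine real \emph{line} rather than your stated ``no real hyperplane''; since you verify line-freeness anyway and deduce hyperplane-freeness from it, this discrepancy does not affect the validity of your argument.
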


\begin{proof}
By Proposition \ref{prop:epigraph} the set $E$ does not contain any affine real line, 
and hence $\C^n\setminus E$ is Oka by \cite[Theorem 1.8]{ForstnericWold2022Oka}.
\end{proof}

The following lemma shows that the BCEH condition is stable under uniform approximation.

%
%
\begin{lemma}\label{lem:approximation}
Assume that $\phi:\R^{n-1}\to\R$ is a convex function whose epigraph
$E_\phi$ \eqref{eq:epi-phi} has BCEH. Then for any $\epsilon>0$ 
and convex function $\psi:\R^{n-1}\to\R$ satisfying $|\phi-\psi|<\epsilon$
the epigraph $E_\psi$ also has BCEH.
\end{lemma}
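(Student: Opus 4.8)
The plan is to reduce the claim to continuity of support functions via Proposition \ref{prop:BCEHcontinuous}, and then to exploit that $E_\psi$ is squeezed between two vertical translates of $E_\phi$. Write $v=(0,\epsilon)\in\R^{n-1}\times\R$. First I would record the elementary inclusions that follow from $|\phi-\psi|<\epsilon$: since $\psi<\phi+\epsilon$ and $\psi>\phi-\epsilon$, and since $E_{\phi\pm\epsilon}=E_\phi\pm v$, one has
\[
	E_\phi+v\ \subset\ E_\psi\ \subset\ E_\phi-v.
\]
Passing to the support functions $\sigma_\phi,\sigma_\psi$ of $E_\phi,E_\psi$ and using $\sigma_{A+v}(u)=\sigma_A(u)+v\cdot u$ together with the monotonicity $A\subset B\Rightarrow\sigma_A\le\sigma_B$, these inclusions give, for every unit vector $u=(u',u_n)\in\R^{n-1}\times\R$,
\[
	\sigma_\phi(u)+\epsilon u_n\ \le\ \sigma_\psi(u)\ \le\ \sigma_\phi(u)-\epsilon u_n.
\]

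Since an epigraph contains the upward vertical recession ray and projects onto all of $\R^{n-1}$, both $\sigma_\phi$ and $\sigma_\psi$ equal $+\infty$ on the closed upper hemisphere $\{u_n\ge 0\}$, so only the directions with $u_n<0$ matter. There the displayed inequalities show two facts at once: $\sigma_\psi(u)$ is finite exactly where $\sigma_\phi(u)$ is finite (hence $\sigma_\phi$ and $\sigma_\psi$ share the same finiteness region $F$), and $|\sigma_\psi-\sigma_\phi|\le\epsilon|u_n|\le\epsilon$ on $F$. Now I would invoke two general features of any support function: it is convex and lower semicontinuous, hence automatically continuous on the relative interior of its finite effective domain, and, by lower semicontinuity, automatically continuous with value $+\infty$ at every direction where it is infinite. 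Consequently the only directions at which a support function can be discontinuous are boundary points of its finiteness region at which it still takes a finite value.

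It therefore remains to control $\sigma_\psi$ on $\partial F$, and this is the step I expect to be the crux. By Proposition \ref{prop:BCEHcontinuous} applied to $E_\phi$, the function $\sigma_\phi$ is continuous; by the previous paragraph this forces $\sigma_\phi\to+\infty$ as $u$ approaches $\partial F$ from inside $F$, since no finite boundary values are allowed. Because $\sigma_\psi\ge\sigma_\phi+\epsilon u_n\ge\sigma_\phi-\epsilon$ on $F$, it follows that $\sigma_\psi\to+\infty$ along $\partial F$ as well, so $\sigma_\psi$ has no finite boundary value and is therefore continuous on the whole unit sphere. By Proposition \ref{prop:BCEHcontinuous} again, $E_\psi$ has BCEH. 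The subtle point is precisely this boundary analysis: the uniform bound $|\sigma_\psi-\sigma_\phi|\le\epsilon$ alone cannot transfer continuity, since a bounded perturbation of a continuous function need not be continuous, so it is essential to obtain interior continuity for free from convexity and to use the comparison with $\sigma_\phi$ only to force the blow-up of $\sigma_\psi$ at $\partial F$.
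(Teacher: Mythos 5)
Your argument is correct, but it takes a genuinely different route from the paper's. The paper argues by contradiction through the boundary-ray/asymptote characterization in Theorem \ref{th:GaleKlee}: if $E_\psi$ failed to have BCEH it would have a boundary ray or asymptote $L$, which (after truncation) is trapped in $E_{\phi-2\epsilon}\setminus E_{\phi+2\epsilon}$; then $L$ is disjoint from $E_{\phi+2\epsilon}$ while its vertical translate by $4\epsilon$ lies inside it, and this contradicts the structure of BCEH epigraphs established in Proposition \ref{prop:epigraph}(ii). You instead work directly with the Gale--Klee definition via support functions: the sandwich $E_\phi+v\subset E_\psi\subset E_\phi-v$ pins $\sigma_{E_\psi}$ within $\epsilon$ of $\sigma_{E_\phi}$ on their common finiteness region $F$, and the general dichotomy for convex, lower semicontinuous, positively homogeneous functions (continuity on the interior of the finiteness region, continuity with value $+\infty$ wherever the value is infinite, hence discontinuities only at finite values on $\partial F$) reduces the claim to showing that $\sigma_{E_\psi}$ blows up at $\partial F$, which you transfer correctly from $\sigma_{E_\phi}$ via the lower bound $\sigma_{E_\psi}\ge\sigma_{E_\phi}-\epsilon$. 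Both proofs pass through Proposition \ref{prop:BCEHcontinuous}, but yours replaces the paper's geometric translate-of-a-ray argument (and its reliance on Proposition \ref{prop:epigraph}) with standard convex-analytic facts about support functions; it is a bit longer but self-contained modulo those facts, and it correctly isolates the one genuinely delicate point, namely that a uniformly bounded perturbation of a continuous extended-real-valued function need not be continuous, so the interior/boundary dichotomy is essential rather than decorative.
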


\begin{proof}
If $E_\psi$ fails to have BCEH then by Theorem \ref{th:GaleKlee} and
Proposition \ref{prop:BCEHcontinuous} it has a boundary ray or an asymptote, $L$.
Since $\dist(L,E_\psi)=0$ and $E_\psi$ is convex, $\dist(x,E_\psi)$ converges to zero 
as $x\in L$ goes to infinity. Thus, by making $L$ shorter if necessary, we have that 
$L\subset E_{\phi-2\epsilon} \setminus E_{\phi+2\epsilon}$. Hence, 
$L$ lies out of $E_{\phi+2\epsilon}$ but the vertical translation of $L$ for $4\epsilon$
pushes it in $E_{\phi+2\epsilon}$. Since $E_{\phi+2\epsilon}$, being a translate of 
$E_\phi$, has BCEH, this contradicts Proposition \ref{prop:epigraph} (ii).
The contradiction shows that $E_\psi$ has BCEH as claimed. 
\end{proof}

We now give a differential characterization of the BCEH property of an epigraph 
\eqref{eq:epi-phi}.

%
%
\begin{proposition}\label{prop:BCEH}
If $\phi:\R^{n-1}\to \R$ is a convex function of class $\Cscr^1$ satisfying 
condition \eqref{eq:proper}, then the epigraph $E=\{(x,y)\in\R^n:y\ge \phi(x)\}$ 
has BCEH if and only if
\begin{equation}\label{eq:tangentline}
	\lim_{|x|\to\infty} |x| \left(1-\frac{\phi(x)}{x\,\cdotp \nabla\phi(x)} \right)
	= +\infty.
\end{equation}
\end{proposition}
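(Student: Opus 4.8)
The plan is to turn \eqref{eq:tangentline} into a statement about supporting hyperplanes and then feed it into the Gale--Klee dictionary already set up in the paper. First I would record the key identity. Since $\phi$ is $\Cscr^1$, the tangent hyperplane to $bE$ at $(x,\phi(x))$ is the supporting hyperplane
\[
	T_x=\{(z,y)\in\R^{n-1}\times\R : y=\phi(x)+\nabla\phi(x)\cdotp(z-x)\},
\]
and by \eqref{eq:proper} together with convexity one has $x\cdotp\nabla\phi(x)\ge\phi(x)-\phi(0)\ge c|x|-\phi(0)>0$ for $|x|$ large, so $T_x$ is non-horizontal. Intersecting $T_x$ with the vertical $2$-plane $\span\{x,e_n\}$ and setting $y=0$ shows that $T_x$ crosses the ray $\R_{+}\cdotp(x/|x|)$ at parameter value exactly
\[
	\tau(x):=|x|\left(1-\frac{\phi(x)}{x\cdotp\nabla\phi(x)}\right).
\]
Thus \eqref{eq:tangentline} says precisely that the \emph{radial floor-intercept} $\tau(x)$ tends to $+\infty$ as $|x|\to\infty$. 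By Proposition \ref{prop:BCEHcontinuous} and Theorem \ref{th:GaleKlee}, $E$ has BCEH if and only if it has neither a boundary ray nor an asymptote, so it suffices to prove that $E$ has such a ray if and only if $\tau$ does \emph{not} tend to $+\infty$.

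For the easy implication I would show that a boundary ray or asymptote $L$ forces $\tau$ to stay bounded along a suitable sequence. Since $E$ contains no affine line (Proposition \ref{prop:epigraph}(i)) and, by \eqref{eq:proper}, the recession function of $\phi$ is bounded below by $c>0$ in every direction, the ray $L$ is directed along some $v=(u,m)$ with $|u|=1$ and $m>0$; in particular the affine line carrying $L$ meets the floor $\R^{n-1}\times\{0\}$ in a single point $P_\ast$ at bounded distance from the origin. Taking boundary points $(x_k,\phi(x_k))$ with $|x_k|\to\infty$ that lie on $L$ (boundary-ray case) or converge to $L$ (asymptote case), the supporting hyperplanes $T_{x_k}$ equal (resp.\ converge to) the hyperplane carrying $L$, whose radial floor-intercept sits near $P_\ast$; hence $\tau(x_k)$ is bounded and \eqref{eq:tangentline} fails. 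In the boundary-ray case $\phi$ is affine along the projection of $L$, so $\nabla\phi$ and $p(x)=\phi(x)-x\cdotp\nabla\phi(x)$ are constant there and a one-line computation gives a finite limit for $\tau(x_k)$.

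For the converse I argue by contraposition. Assume $\tau$ does not tend to $+\infty$, so there are $x_k$ with $|x_k|\to\infty$ and $\tau(x_k)\le M$; I must produce a boundary ray or asymptote, contradicting continuity. Passing to a subsequence, $u_k:=x_k/|x_k|\to u$ and $r_k:=|x_k|\to\infty$. Writing $g_k(r)=\phi(ru_k)$ the identity reads $\tau(x_k)=r_k-g_k(r_k)/g_k'(r_k)$, and a short convexity computation shows that a radial profile of infinite asymptotic slope forces $\tau\to+\infty$; hence the bounded-intercept hypothesis confines us to the case of finite asymptotic slope, where the gradients $a_k:=\nabla\phi(x_k)$ are bounded. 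Passing to a further subsequence $a_k\to a$ and, using $\tau(x_k)\le M$ to bound the intercept $b_k:=p(x_k)=-\tau(x_k)\,(a_k\cdotp u_k)$, also $b_k\to b$. Letting $k\to\infty$ in the support inequality $\phi(z)\ge a_k\cdotp z+b_k$ at each fixed $z$ yields a genuine supporting hyperplane $H=\{y=a\cdotp z+b\}$ of $E$ whose radial floor-intercept is the bounded limit of $\tau(x_k)$, i.e.\ $H$ meets the floor at a bounded point $P_\ast$. If the contact set $\{z:\phi(z)=a\cdotp z+b\}$ is unbounded it contains a ray, giving a boundary ray; if it is empty, $H$ supports $E$ only at infinity and the ray issuing from $P_\ast$ in the limiting direction $(u,m_u)$ is an asymptote. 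Either way $E$ fails to be continuous.

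The main obstacle is the last step of the converse: verifying that the limiting hyperplane $H$ makes contact with $E$ \emph{at infinity} (or along a ray) rather than lifting off from the escaping boundary points. Concretely, one must control the gap $\phi(x_k)-a\cdotp x_k-b=(a_k-a)\cdotp x_k+(b_k-b)$, where the term $(a_k-a)\cdotp x_k$ is a product of a vanishing and a diverging factor and is therefore not obviously small. This is where the radial intercept (along lines through the origin) must be reconciled with asymptotes running along arbitrary parallel lines: the non-uniform failure of \eqref{eq:tangentline}, with $u_k\to u$ but the boundary points offset, is exactly how an off-origin asymptote manifests. I would handle it by treating the $x_k$ as an asymptotically critical sequence of the convex function $\psi(z)=\phi(z)-a\cdotp z$ (note $\nabla\psi(x_k)=a_k-a\to0$), using convexity monotonicity $(\nabla\psi(x_k)-\nabla\psi(z))\cdotp(x_k-z)\ge0$ to show that $\psi(x_k)$ approaches $\inf_z\psi$ along the escaping sequence, so that the minimum of $\psi$ is attained only in the limit $|z|\to\infty$; by the Gale--Klee criterion this non-attainment is precisely an asymptote (or, if the infimum is attained along a ray, a boundary ray). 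The remaining bookkeeping---the degenerate case $\tau(x_k)\to-\infty$, and ensuring $u_k\to u$ does not corrupt the slope comparison---I expect to be routine once this gap estimate is in place.
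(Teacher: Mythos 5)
Your plan is genuinely different from the paper's: the paper proves the implication \eqref{eq:tangentline} $\Rightarrow$ BCEH directly, exhibiting $h(E,K)$ as an explicitly bounded region (triangles cut out by tangent lines in the planar case, assembled over all radial $2$-planes via Lemma \ref{lem:L}), and only sketches the converse, whereas you route both implications through the Gale--Klee boundary-ray/asymptote criterion. The reduction via Proposition \ref{prop:BCEHcontinuous} and the identification of $\tau(x)=|x|\bigl(1-\phi(x)/(x\cdot\nabla\phi(x))\bigr)$ as the radial floor-intercept of the tangent hyperplane are correct, and your observation that $\tau(x_k)\le M$ forces $\nabla\phi(x_k)$ to stay bounded can indeed be justified. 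However, the decisive step of your converse --- exactly the one you flag as the main obstacle --- fails. From a sequence $x_k\to\infty$ with $\tau(x_k)\le M$ you extract the minorizing hyperplane $H=\{y=a\cdot z+b\}$ and claim that if its contact set is empty, the ray issuing from $P_*$ in the limiting direction is an asymptote. Take $n=3$, $\phi(z_1,z_2)=\sqrt{z_1^2+1}+\sqrt{z_2^2+1}$ and $x_k=(k,\sqrt{k})$: then $\tau(x_k)\to 0$, $u=(1,0)$, $a=(1,1)$, $b=0$, $P_*=0$, the contact set of $\psi=\phi-z_1-z_2$ is empty, yet the proposed ray $\{(t,0,t):t\ge 0\}$ stays at distance at least $1/\sqrt{2}$ from $E$ because $E\subset\{y\ge z_1+1\}$. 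Here $\inf\psi=0$ is approached only when both coordinates tend to infinity jointly, which no single ray over $H$ witnesses; $E$ does have an asymptote (e.g.\ $\{(t,0,t+1)\}$), but it sits over a different supporting hyperplane, $y=z_1+1$, which your limit procedure never produces. The gap estimate via $\psi(x_k)\to\inf\psi$ cannot repair this, and your dichotomy also omits the case of a nonempty bounded contact set. The same loss of control over the component of $\nabla\phi(x_k)$ transverse to $x_k$ undermines the asymptote case of your ``easy'' implication, where both ``$T_{x_k}$ converge to the hyperplane carrying $L$'' and ``floor trace through a bounded point implies $\tau$ bounded'' are asserted but not established.

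The repair is the paper's radial slicing, which you should apply \emph{before} passing to limits of gradients. Writing $x=tv$ with $|v|=1$ and $g_v(t)=\phi(tv)$, one has $\tau(tv)=t-g_v(t)/g_v'(t)=\xi_v(t)$, the one-variable tangent intercept of $g_v$, which is nondecreasing in $t$. Hence $\tau(x_k)\le M$ with $x_k=t_kv_k$, $v_k\to v$, gives $\xi_{v_k}(t)\le M$ for all $t\le t_k$, and letting $k\to\infty$ at fixed $t$ (using continuity of $\nabla\phi$ and the lower bound on $v\cdot\nabla\phi(tv)$ coming from \eqref{eq:proper}) yields $\xi_v\le M$ on all of $\R_+$. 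This confines the problem to the $2$-plane $L_v=\span\{v,e_n\}$, where the one-variable argument does close up: $\xi_v$ and $g_v'$ are monotone and bounded, the intercepts converge, and the limiting tangent line really is a boundary ray or asymptote of the slice $E\cap L_v$; finally, any boundary ray or asymptote of $E\cap L_v$ inside $L_v$ is one of $E$, since $b(E\cap L_v)\subset bE$ and $\dist(\cdot,E)\le\dist(\cdot,E\cap L_v)$. With this substitution both of your implications go through; without it, the converse as written proves nothing in dimensions $n\ge 3$.
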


\begin{proof}
We first consider the case $n=2$. Then, $x$ is a single variable and \eqref{eq:tangentline} 
is equivalent to 
\begin{equation}\label{eq:tangentline2}
	\lim_{x\to+\infty} \left(x - \frac{\phi(x)}{\phi'(x)}\right) = +\infty
	\quad \text{and}\quad
	\lim_{x\to -\infty} \left(x - \frac{\phi(x)}{\phi'(x)}\right) = -\infty.
\end{equation}
For every $x\in \R$ such that $\phi'(x)\ne 0$ the number
\begin{equation}\label{eq:xi}
	\xi(x) = x - \frac{\phi(x)}{\phi'(x)}
\end{equation}
is the first coordinate of the intersection of the tangent line to the graph of $\phi$ 
at the point $(x,\phi(x))$ with the first coordinate axis $y=0$. By \eqref{eq:proper} and convexity 
of $\phi$ we have that $|\phi'(x)|$ is bounded away from zero for all sufficiently 
big $|x|$. This shows that conditions \eqref{eq:tangentline2} are invariant under
translations, so we may assume that $\phi\ge 0$ and $\phi(0)=0$. It is easily seen that 
the function $\xi$ is increasing. If $\phi$ is of class $\Cscr^2$, 
we have that $\xi'(x)=\phi(x)\phi''(x)/\phi'(x)^2 \ge 0$. 

Assume now that conditions \eqref{eq:tangentline2} hold.
Pick a pair of sequences $a_j < b_j$ in $\R$ with $\lim_{j\to\infty}a_j=-\infty$ and 
$\lim_{j\to\infty}b_j=+\infty$. The intervals $I_j=[\xi(a_j),\xi(b_j)]$ then 
increase to $\R$ as $j\to\infty$. We identify $I_j$ with $I_j\times\{0\}\subset\R^2$.
Since $\phi$ is convex, its epigraph lies above the tangent line at any point. 
It follows that the set $h(E,I_j)$ (see \eqref{eq:h}) is the bounded region in 
$\R\times \R_+$ whose boundary consists of $I_j$, the two line segments $L_j$ and $L'_j$ 
connecting the endpoints $(\xi(a_j),0)$ and $(\xi(b_j),0)$ of $I_j$ to the respective points 
$A_j=(a_j,\phi(a_j))$ and $B_j=(b_j,\phi(b_j))$ on $bE$, and the graph of $\phi$ over $[a_j,b_j]$. 
The supporting lines of $L_j$ and $L'_j$ intersect at a point $C_j$ in the lower halfspace
$y<0$, and we obtain a closed triangle $\Delta_j$ with the endpoints $A_j,B_j$, and $C_j$.
Note that $\Delta_j\cap(\R\times \{0\}) = I_j$. Since $\phi$ grows at least 
linearly (see \eqref{eq:proper}), the triangles $\Delta_j\subset \R^2$ 
exhaust $\R^2$ as $j\to\infty$, and the set $h(E,\Delta_j)$ \eqref{eq:h} 
is bounded for every $j$. Hence, $E$ has BCEH. 
This argument furthermore shows that for any point $p=(0,-c)\notin E$ 
there is a unique pair of tangent lines to $bE$ passing through $p$ such that,
denoting by $q_1,q_2\in bE$ the respective points where these lines intersect $bE$,
the convex hull $\Conv(E\cup\{p\})$ is the union of $E$ and the triangle with vertices
$p,q_1,q_2$.

Conversely, if \eqref{eq:tangentline} fails then it is easily seen that $E$ has a boundary ray 
or an asymptote, so it does not have BCEH. We leave the details to the reader.

The case with $n\ge 3$ now follows easily. Pick a unit vector $v\in \R^{n-1}$,
$|v|=1$, and let $L_v$ denote the $2$-plane in $\R^n$ passing through the 
origin and spanned by $v$ and $e_n=(0,\ldots,0,1)$. Then, 
$E_v:=E\cap L_v= \{(t,y)\in\R^2: y \ge \phi(tv)\}$ and the first condition 
in \eqref{eq:tangentline2} reads
\begin{equation}\label{eq:tangentline3}
	\lim_{t\to +\infty} \left(t-\frac{\phi(tv)}{\sum_{j=1}^{n-1} v_j 
	\frac{\di \phi}{\di x_j}(tv)} \right)
	= +\infty.
\end{equation}
Writing $x=tv$ with $t\ge 0$ and $v=x/|x|$, this is clearly equivalent to 
\eqref{eq:tangentline}. As before, let $p=(0,\ldots,0,-c)\notin E$.
If \eqref{eq:tangentline} holds then $\Conv(E_v\cup\{p\})\subset L_v$ is 
obtained by adding to $E_v$ the triangle in $L_v$ obtained by the two tangent lines to 
$bE_v$ passing through $p$ as described in the case $n=2$. The sizes of 
these triangles are uniformly bounded with respect to the 
direction vector $|v|=1$, and condition \eqref{eq:proper} implies 
that these triangles increase to $L_v$ as $c\to +\infty$, uniformly with respect to $v$.
Since $\bigcup_{|v|=1}L_v=\R^n$, Lemma \ref{lem:L} shows that 
\[
	\Conv(E\cup \{p\}) = \bigcup_{|v|=1} \Conv(E_v\cup \{p\}),
\]
and hence $E$ has BCEH. The converse is seen as in the special case $n=2$.
\end{proof}

%
%
\begin{corollary}\label{cor:derivative}
If $\phi:\R^{n-1}\to\R_+$ is a convex function of class $\Cscr^1$ such that 
\[
	\lim_{|x|\to+\infty} \frac{x\,\cdotp \nabla\phi(x)}{|x|} =+\infty, 
\]
then the epigraph $E=\{(x,y) \in \R^n : y\ge \phi(x)\}$ has BCEH.
\end{corollary}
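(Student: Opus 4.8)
The plan is to deduce the corollary from the differential criterion in Proposition \ref{prop:BCEH}. That proposition requires the linear-growth hypothesis \eqref{eq:proper} together with the limit \eqref{eq:tangentline}, so I would first check that both follow from the assumption $\lim_{|x|\to\infty}(x\cdot\nabla\phi(x))/|x|=+\infty$, and then simply invoke Proposition \ref{prop:BCEH} to conclude that $E$ has BCEH.

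The key is to restrict everything to rays through the origin. For a unit vector $u$ and $t\ge 0$ set $g(t)=\phi(tu)$; then $g$ is a nonnegative convex function of class $\Cscr^1$, its derivative $g'(t)=u\cdot\nabla\phi(tu)$ is nondecreasing in $t$, and for $x=tu$ one has $x\cdot\nabla\phi(x)=t\,g'(t)$ while $(x\cdot\nabla\phi(x))/|x|=g'(t)$. A direct substitution then shows that the quantity whose limit appears in \eqref{eq:tangentline} equals $t-g(t)/g'(t)$, i.e.\ exactly the tangent intercept $\xi(t)$ of \eqref{eq:xi} associated to $g$. In these terms the hypothesis says that $g'(t)\to+\infty$ as $t\to\infty$ (in particular $g'(t)>0$ for large $t$, so the quotient makes sense), and what must be shown is that $t-g(t)/g'(t)\to+\infty$ as well.

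For the estimate I would fix a target $N$, write $g(t)=g(N)+\int_N^t g'(s)\,ds\le g(N)+(t-N)g'(t)$ using that $g'$ is nondecreasing, and obtain $t-g(t)/g'(t)\ge N-g(N)/g'(t)$; since $g'(t)\to+\infty$ and $N$ is arbitrary, this yields $\xi(t)\to+\infty$. The growth condition \eqref{eq:proper} drops out of the same integral bound from below, because $g(t)\ge g(N)+M(t-N)$ once $g'\ge M$, so $\phi(x)/|x|$ is eventually bounded below by a positive constant. The one point requiring care — and the main obstacle — is \emph{uniformity in the direction} $u$: the limits in \eqref{eq:tangentline} and in the hypothesis are taken as $|x|\to\infty$ over all of $\R^{n-1}$, not along a single fixed ray. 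To handle this I would replace the direction-dependent value $g(N)=\phi(Nu)$ by the direction-independent bound $C_N=\max_{|u|=1}\phi(Nu)$, finite by continuity of $\phi$ and compactness of the unit sphere, and use that the hypothesis furnishes, for each $M$, a radius $R$ with $g'(t)>M$ for all $t>R$ and all $u$. With these uniform choices the inequality $t-g(t)/g'(t)\ge N-C_N/g'(t)$ holds for all sufficiently large $|x|$ regardless of direction, and since $C_N/g'(t)\to 0$ uniformly while $N$ is arbitrary, this is precisely \eqref{eq:tangentline}. Proposition \ref{prop:BCEH} then applies.
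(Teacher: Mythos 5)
Your proof is correct and follows essentially the same route as the paper's: both reduce to the radial restriction $t\mapsto\phi(tu)$ and bound the tangent intercept $\xi$ from below by $N-\phi(Nu)/g'(t)$ (the paper writes this as $\int_0^{x_0}\bigl(1-g(t)/g(x)\bigr)\,dt$ with $g=\phi'$, which is the identical estimate), then let $g'(t)\to+\infty$ and $N\to\infty$. Your explicit verification of \eqref{eq:proper} and of uniformity in the direction $u$ via $C_N=\max_{|u|=1}\phi(Nu)$ is a welcome touch of care that the paper leaves implicit in the phrase ``by restricting to planes as in the above proof.''
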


\begin{proof}
By restricting to planes as in the above proof, it suffices to consider
the case $n=2$. We may assume that $\phi\ge 0$ and $\phi(0)=0$.
Since $\phi$ is convex, $g(x)=\phi'(x)$ is an increasing function 
and the above condition reads $\lim_{x\to\pm\infty} g(x)=\pm\infty$. 
For any $x_0>0$ and $x\ge x_0$ we have that
\[
	\xi(x):= x - \frac{1}{g(x)} \int_0^x g(t)dt 
	=\int_0^{x} \Big(1-\frac{g(t)}{g(x)}\Big) dt
	\ge \int_0^{x_0} \Big(1-\frac{g(t)}{g(x)}\Big) dt.
\]
Letting $x\to+\infty$ we have that $\frac{g(t)}{g(x)}\to 0$ uniformly 
on $t\in [0,x_0]$, and hence the last integral converges to $x_0$. 
Letting $x_0\to\infty$ we see that $\lim_{x\to+\infty}\xi(x)=+\infty$. 
The analogous argument applies when $x\to-\infty$. Hence, conditions 
\eqref{eq:tangentline} hold and therefore $E$ has BCEH.
\end{proof}

%
%
\begin{example}\label{ex:lineargrowth}
There exist convex epigraphs \eqref{eq:epi-phi} 
having BCEH where the function $\phi$ grows linearly, although it cannot 
be too close to linear near infinity in the absence of boundary rays and asymptotes. 
We give such an example in $\R^2$.
Let $g:\R\to (-1,1)$ be an odd, continuous, increasing function
with $\lim_{x\to+\infty} g(x)=1$ and $\int_0^\infty (1-g(x))dx = +\infty$.
(An explicit example is $g(x) = \frac{2}{\pi}\mathrm{Arctan}\,x$.) 
Its integral $\phi(x)=\int_0^x g(t)dt$ for $x\in\R$ then clearly satisfies 
$\phi(x)\ge 0$, $\phi'(x)=g(x)\in (-1,+1)$ (hence $\phi$ grows linearly), and $\phi$ is convex.
We now show that  \eqref{eq:tangentline} holds. Let $x>0$ be large enough 
so that $g(x)>0$. We have that
\[
	\xi(x) = x - \frac{1}{g(x)} \int_0^x g(t)dt 
	=\int_0^{x} \Big(1-\frac{g(t)}{g(x)}\Big) dt.
\]
Fix $x_0>0$ and let $x\ge x_0$. Then, $\xi(x)\ge \int_0^{x_0} (1-g(t)/g(x))dt$.
Since $\lim_{x\to+\infty} g(x)=1$ and $\xi$ is increasing for large enough $|x|$, 
it follows that 
$
	\lim_{x\to+\infty} \xi(x) \ge \int_0^{x_0} (1-g(t))dt.
$
Sending $x_0\to +\infty$ gives 
$
	\lim_{x\to+\infty} \xi(x) \ge \int_0^\infty (1-g(t))dt =+\infty.
$
Similarly we see that $\lim_{x\to -\infty} \xi(x) =-\infty$. Thus, \eqref{eq:tangentline} holds, 
and hence the epigraph of $\phi$ has BCEH. 
\end{example}

By using the idea in the above example we now prove the following
approximation result, which extends Theorem \ref{th:main}
to a much bigger class of convex epigraphs (see Corollary \ref{cor:main}).

%
%
\begin{proposition}\label{prop:approximation} 
Assume that $\phi:\R^{n-1}\to\R_+$ is a convex function such that the set $\{\phi=0\}$ 
is nonempty and compact. Given numbers $\epsilon>0$ (small) and $R>0$ (big) there is a 
smooth convex function $\psi:\R^{n-1}\to\R$ such that $\psi<\phi$ on $\R^{n-1}$, 
$\phi(x) -\psi(x) < \epsilon$ for all $|x|\le R$, and the epigraph $E_\psi=\{y\ge \psi\}$ has BCEH.
\end{proposition}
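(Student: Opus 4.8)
The plan is to enlarge the epigraph downward by a controlled amount so that $E_\psi$ peels away from its asymptotic cone slowly, realizing in every marginally-linear direction the slow-approach mechanism of Example~\ref{ex:lineargrowth}. I would carry this out on the dual side, where convexity and smoothness come for free and where ``slow approach'' becomes a clean condition on a barrier.

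\emph{Setup.} Since $\phi\ge 0$ is convex with $\{\phi=0\}$ nonempty and compact, all sublevel sets $\{\phi\le c\}$ are bounded; hence the recession function $p$ of $\phi$ satisfies $p(v)>0$ for $v\neq 0$, so $E=E_\phi$ contains no line and, in the directions where $p$ is finite, grows at least linearly (in the spirit of Proposition~\ref{prop:epigraph}). After translating a minimizer to the origin, consider the Legendre transform $\phi^\ast(\xi)=\sup_x(\langle\xi,x\rangle-\phi(x))$, a closed proper convex function, and write $D=\overline{\mathrm{dom}\,\phi^\ast}$. Because $\phi$ has a minimum at $0$ and grows, $0\in\mathrm{int}\,D$. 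In the directions where $\phi$ is superlinear, $D$ is unbounded and there is nothing to repair (such rays are automatically steep, and by Corollary~\ref{cor:derivative} they create neither boundary rays nor asymptotes); the obstruction to BCEH lives entirely over the \emph{finite} part of $\partial D$, where by Theorem~\ref{th:GaleKlee} a finite boundary value of $\phi^\ast$ or a lack of strict convexity corresponds exactly to an asymptote or a boundary ray of $E$.

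\emph{Construction.} Fix a smooth convex barrier $b\ge 0$ on $\mathrm{int}\,D$ that vanishes to high order on a compact set $D'\supset\nabla\phi(\bar B_R)$ and tends to $+\infty$ at $\partial D$ at a slow, logarithmic rate, and set
\[
	\psi=(\phi^\ast+b)^\ast .
\]
Then $\psi$ is convex for free; since $\phi^\ast+b\ge\phi^\ast$ and conjugation reverses order, $\psi\le\phi^{\ast\ast}=\phi$, whence $\psi$ is also finite everywhere; and $\overline{\mathrm{dom}(\phi^\ast+b)}=D$, so $\psi$ has the same recession function $p$, i.e.\ the same growth, as $\phi$. The point is that the blow-up of $b$ removes all finite boundary values of $\phi^\ast$ and enforces strict convexity up to $\partial D$, which is precisely the dual form of the slow slope-approach of Example~\ref{ex:lineargrowth}: in a linearly-growing direction the slope of $\psi$ tends to its recession value so slowly that the gap to the cone diverges. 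I would verify BCEH directly through the support function, using Proposition~\ref{prop:BCEHcontinuous}: the support function of $E_\psi$ equals $(u,s)\mapsto(-s)\,\psi^\ast(u/(-s))$ for $s<0$ and $+\infty$ for $s=0$, and the barrier makes it tend to $+\infty$ continuously as $s\to 0^-$, so it is continuous as a map into $\R\cup\{+\infty\}$. Smoothness of $\psi$ is arranged by taking $\phi^\ast+b$ smooth and strongly convex on compacts.

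\emph{Closeness and the main obstacle.} On $\bar B_R$ one has $\psi(x)=\sup_\xi(\langle\xi,x\rangle-\phi^\ast(\xi)-b(\xi))$, and comparing with $\phi(x)=\sup_{\xi\in D}(\langle\xi,x\rangle-\phi^\ast(\xi))$ shows that $0\le\phi(x)-\psi(x)$ is bounded by $\sup_{D'}b$ plus the loss incurred by penalizing slopes near $\partial D$. The main obstacle is exactly this closeness estimate when $\nabla\phi(\bar B_R)$ reaches the boundary of $D$ -- that is, when $\phi$ is already affine along a ray emanating from $\bar B_R$ (a flat face of the recession cone meeting $\bar B_R$); then the maximizing slope is suppressed by $b$ and $\psi$ can dip below $\phi$ on $\bar B_R$. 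I would beat this by letting $b$ rise slowly enough in a thin boundary layer that the suppressed contribution costs less than $\epsilon$ on $\bar B_R$, which is possible precisely because only an arbitrarily thin collar of slopes need be penalized to force the divergent gap at infinity. Finally, Lemma~\ref{lem:approximation} gives stability of the BCEH property, and subtracting an arbitrarily small positive constant from $\psi$ -- a vertical translation, harmless both for BCEH and for the budget -- upgrades $\psi\le\phi$ to the required strict inequality $\psi<\phi$.
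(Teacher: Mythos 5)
Your construction is genuinely different from the paper's, and its core mechanism is sound. The paper works entirely on the primal side: after replacing $\phi$ by a smooth strictly convex minorant (Azagra), it sets $\psi=r\phi+\delta H$ with $r<1$ and $H(x)=\int_0^{|x|}h(s)\,ds$, where $h$ increases to $1$ and $\int_0^\infty(1-h)=+\infty$, and then verifies BCEH through the tangent--intercept criterion of Proposition~\ref{prop:BCEH} by a one-variable estimate along rays. You instead perturb the Legendre transform, $\psi=(\phi^*+b)^*$, with a convex barrier $b$ blowing up at the boundary of $D=\overline{\mathrm{dom}\,\phi^*}$ and supported in a thin collar. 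This is an attractive reformulation: convexity of $\psi$ and $\psi\le\phi^{**}=\phi$ come for free, and the obstruction to BCEH becomes precisely the existence of finite values of $\phi^*$ on $\partial(\mathrm{dom}\,\phi^*)$, which are dual to boundary rays and asymptotes of $E_\phi$. Your collar argument for the $\epsilon$-estimate on $\bar B_R$ (moving a maximizing subgradient $\xi_x$ a distance $O(\delta_0)$ into $\mathrm{int}\,D$ costs $O(\delta_0)$ in the value and escapes the support of $b$) is the right idea and is completable; note only that $\phi$ is not assumed differentiable, so $\nabla\phi(\bar B_R)$ should be the subdifferential image $\partial\phi(\bar B_R)$.

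Two points need repair. First, the continuity verification is aimed at the wrong locus. The support function $\sigma(u,s)=(-s)\,\psi^*\bigl(u/(-s)\bigr)$ is \emph{automatically} continuous as $s\to 0^-$: since $\phi$ (hence $\psi$) is finite on all of $\R^{n-1}$, the conjugate $\psi^*\ge\phi^*$ is superlinear at infinity, and the barrier plays no role there. The genuine discontinuities of $\sigma_{E_\phi}$ sit at directions $(u,s)$ with $s<0$ bounded away from $0$ and $u/(-s)\in\partial(\mathrm{dom}\,\phi^*)$ where $\phi^*$ is finite (for $\phi(x)=|x|$ the jump is at $(u,s)=(1,-1)/\sqrt 2$, not at $s=0$). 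What must be checked --- and what your barrier does achieve, since $\psi^*=\phi^*+b$ is lower semicontinuous and identically $+\infty$ on $\partial(\mathrm{dom}\,\psi^*)$ --- is continuity at exactly those directions; as written, your argument does not check it. Second, smoothness of $\psi$ cannot be ``arranged by taking $\phi^*+b$ smooth and strongly convex on compacts'': $\phi^*$ is given, not chosen, and is in general neither smooth nor strictly convex; even $\Cscr^1$ regularity of $(\phi^*+b)^*$ requires essential strict convexity of $\phi^*+b$, and higher regularity requires more. The clean fix is the one the paper uses at the outset: once your $\psi$ is built, apply Azagra's theorem to obtain a smooth strictly convex minorant within $\epsilon'$ of $\psi$, and invoke Lemma~\ref{lem:approximation} to see that BCEH survives this final approximation. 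With these two repairs your argument becomes a valid alternative proof.
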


\begin{proof}
By Remark \ref{rem:proper} the function $\phi$ grows at least linearly 
near infinity (see \eqref{eq:proper}). Set 
\begin{equation}\label{eq:A}
	A =\liminf_{|x|\to\infty} \frac{\phi(x)}{|x|} >0. 
\end{equation}
Since the set $\phi=0$ does not contain any affine line,  
Azagra's result \cite[Theorem 1 and Proposition 1]{Azagra2013} 
implies that for every $\epsilon>0$ there is a smooth 
strictly convex function $\psi$ on $\R^{n-1}$ satisfying $\phi-\epsilon<\psi <\phi$. 
Replacing $\phi$ by $\psi-\min_x \psi(x)\ge 0$ we may therefore assume that 
$\phi$ is smooth. By increasing the number $R>0$ if necessary,
we may assume that 
\begin{equation}\label{eq:A2}
	\frac{\phi(x)}{|x|} \ge \frac{A}{2} \ \ \ \text{for all}\ |x|\ge R. 
\end{equation}
Pick a number $r\in (0,1)$ close to $1$ such that 
\begin{equation}\label{eq:epsilon}
	(1-r) \sup_{|x|\le R} \phi(x) <\epsilon.
\end{equation} 
Choose a smooth increasing function $h:\R\to \R_+$ such that
\[
	\text{$h(t)=0$ for $t\le R$}, \quad \lim_{t\to+\infty} h(t)=1,
	\quad \text{and}\quad \int_0^\infty (1-h(t))dt=+\infty. 
\] 
(We can take a smoothing of the $\mathrm{Arctan}$ function used in 
Example \ref{ex:lineargrowth}.) Set 
\[
	H(x) = \int_0^{|x|} h(s)ds\quad \text{for $x\in\R^{n-1}$}.
\]
Clearly, $H\ge 0$ is a radially symmetric smooth convex function 
that vanishes on $|x|\le R$ and satisfies $H(x)\le |x|$ for all $x\in\R^{n-1}$.
With $A$ and $r$ as in \eqref{eq:A} and \eqref{eq:epsilon} we set
\[
	\delta= \frac{A(1-r)}{2}.
\] 
We claim that the function 
\[
	\psi(x) = r\phi(x) + \delta H(x) \quad \text{for $x\in \R^{n-1}$}
\]
satisfies the conditions in the theorem. Clearly, $\psi\ge r\phi$ is a smooth convex function.
For $|x|\le R$ we have $H(x)=0$, so $\psi(x)=r\phi(x) \le \phi(x)$ and 
$\phi(x)-\psi(x) = (1-r)\phi(x)<\epsilon$ by \eqref{eq:epsilon}.
If $|x|>R$ then $\phi(x)/|x|\ge A/2$ by \eqref{eq:A2} and $H(x)<|x|$, which implies 
\[
	\frac{\psi(x)}{|x|} \le r \frac{\phi(x)}{|x|} + \delta \le  \frac{\phi(x)}{|x|}.
\]
Indeed, we have that 
$\frac{\phi(x)}{|x|}- r\frac{\phi(x)}{|x|} = (1-r) \frac{\phi(x)}{|x|} \ge \frac{A(1-r)}{2}=\delta$.
Hence, $\psi\le \phi$ on $\R^{n-1}$.

It remains to show that the epigraph $E_\psi$ satisfies BCEH. 
We shall verify \eqref{eq:tangentline}, which is equivalent to
\eqref{eq:tangentline3} with uniform convergence with respect to the 
vector $v=x/|x|$. Write
\[
	g_v(t) = r \frac{\di \phi (tv)}{\di t},\quad 
	k(t)=\delta h(t) ,\quad 
	\tilde g_v(t) = \frac{\di \psi(tv)}{\di t}= g_v(t) + k(t).
\]
The quantity in \eqref{eq:tangentline3} associated to the function $\psi$ is given by
\begin{eqnarray*}
	\xi_v(t) &=& t - \frac{\psi(tv)}{\tilde g_v(t)} = 
	\int_0^t \left(1-\frac{g_v(s)+k(s)}{g_v(t)+k(t)}\right) ds \\
	&=& \int_0^t \frac{g_v(t)-g_v(s)}{g_v(t)+k(t)}ds 
	+ \int_0^t \frac{k(t)-k(s)}{g_v(t)+k(t)}ds \\
	&\ge&  \int_0^t \frac{g_v(t)-g_v(s)}{g_v(t)+\delta}ds +
	\int_0^t \frac{k(t)-k(s)}{g_v(t)+\delta}ds, 
\end{eqnarray*}
where the last inequality holds since the functions $g_v$ and $k$ are nonnegative
and increasing and $k<\delta$. Pick $c>0$. We will show that for large
enough $t>0$ and any unit vector $v\in\R^{n-1}$ 
the above expression is bigger than or equal to $c$.
Choose positive numbers $t_0,a,t_1$ as follows:
\[
	t_0=3 c,\quad a= \max\{3\max_{|v|=1}g_v(t_0),3\delta\}, \quad  
	\int_0^{t_1} (k(t_1)-k(s))ds > ac.
\]
Such $t_1$ exists since 
$\lim_{t\to+\infty}\int_0^t (k(t)-k(s))ds = \delta \int_0^\infty (1-h(s))ds=+\infty$.
Since the integrands in the bound for $\xi_v(t)$ are nonnegative, we have 
for $t\ge \max\{t_0,t_1\}$ and $|v|=1$ that
\begin{equation}\label{eq:estimatexi}
	\xi_v(t) \ge \int_0^{t_0} \frac{g_v(t)-g_v(s)}{g_v(t)+\delta}ds +
	\int_0^{t_1} \frac{k(t)-k(s)}{g_v(t)+\delta}ds. 
\end{equation}
Assume that for some such $(t,v)$ we have that $g_v(t)+\delta \ge a$.
Since $a\ge 3\delta$, it follows that $g_v(t)\ge 2\delta$ and hence
\[
	\frac{g_v(t)}{g_v(t)+\delta} \ge \frac23.
\]
Furthermore, from $a\ge 3\max_{|v|=1}g_v(t_0)$ we get for $0\le s\le t_0$ that
\[  
	\frac{g_v(s)}{g_v(t)+\delta} \le \frac{g_v(t_0)}{a}\le \frac13.
\]
These two inequalities imply that the first integral in \eqref{eq:estimatexi} is bounded below
by $t_0/3\ge c$. If on the other hand $g_v(t)+\delta < a$ 
then the denominator of the second integral in \eqref{eq:estimatexi}
is at most $a$, so the integral is $\ge c$ by the choice of $t_1$.
This shows that $\xi_v(t)\ge c$ for all $|v|=1$ and $t\ge \max\{t_0,t_1\}$.
Since $c$ was arbitrary, condition  \eqref{eq:tangentline}
holds and hence $E_\psi$ has BCEH.
\end{proof}

The following observation will be used in the proof of Theorem \ref{th:main}.

\begin{proposition}\label{prop:exhaustion}
Denote by $\B$ the open unit ball in $\R^n$.
Let $E_\phi\subset \R^n$ be a closed convex set of the form
\eqref{eq:epi-phi} with $\Cscr^1$ boundary having BCEH, where the function
$\phi:\R^{n-1}\to\R$ is bounded from below and strictly convex near infinity.
Then there is an $r_0>0$ such that for every $r\ge r_0$ 
the convex hull $\Conv(E_\phi \cup r\cB)=\{y\ge \psi(x)\}$ is a closed convex set 
with BCEH, and $\psi:\R^{n-1}\to \R$ is a convex function of class 
$\Cscr^1$ such that $\psi\le\phi$ and these functions agree near infinity. 
Furthermore, if $r\ge r_0$ is large enough then the function $\phi_t:\R^{n-1}\to\R$ defined by
\begin{equation}\label{eq:phit}
	\phi_t(x)= (1-t)\phi(x) + t \psi(x),\quad x\in \R^{n-1} 
\end{equation}
is strictly convex for every $t\in (0,1)$, and for any $0<t_0 <t_1<1$ the closure of the set 
\[
	\{(x,y) \in\R^n:  \phi_{t_1}(x) <y< \phi_{t_0}(x) \} 
\]
is a strictly convex cap with the base in the strictly convex hypersurface $\{y=\phi_{t_0}(x)\}$.
\end{proposition}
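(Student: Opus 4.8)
The plan is to build $\psi$ as the lower boundary of the convex hull $\Conv(E_\phi\cup r\cB)$ and then to read off both parts of the statement from the explicit geometry of this hull, which generalizes the construction carried out in the proof of Proposition~\ref{prop:BCEH}. First I would note that, since $E_\phi$ has BCEH, it is continuous (Proposition~\ref{prop:BCEHcontinuous}), so by Theorem~\ref{th:GaleKlee}(iv) the set $\Conv(E_\phi\cup r\cB)$ is closed; being convex and containing the upward vertical rays of $E_\phi$, it is the epigraph of a convex function $\psi\le\phi$. The crucial point is the fibrewise anatomy of its lower boundary: exactly as in the $p=(0,\dots,0,-c)$ analysis in the proof of Proposition~\ref{prop:BCEH}, reducing to the planes $L_v$ through the $y$-axis via Lemma~\ref{lem:L}, this lower boundary consists of the lower spherical cap of $r\cB$, a ruled region of common tangent hyperplanes to the sphere and to $\{y=\phi(x)\}$, and the graph of $\phi$ itself. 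Because $E_\phi$ has BCEH, the intercepts $\phi(x)-x\cdotp\nabla\phi(x)$ of the tangent planes to $\{y=\phi\}$ tend to $-\infty$ as $|x|\to\infty$ (this is precisely \eqref{eq:tangentline}), so for each large $r$ these common tangents exist and touch $\{y=\phi\}$ at finite points whose distance to the origin tends to $+\infty$ with $r$. Consequently $\psi$ agrees with $\phi$ off a compact set; in particular $E_\psi$ and $E_\phi$ have the same boundary rays and asymptotes, so $E_\psi$ has BCEH by Theorem~\ref{th:GaleKlee} and Proposition~\ref{prop:BCEHcontinuous}. Moreover $\psi$ is of class $\Cscr^1$: the spherical cap and the graph are $\Cscr^1$, the intermediate hyperplanes are tangent to each, and strict convexity of $\phi$ near infinity makes the contact points unique, so the three pieces glue with matching first-order data. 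Choosing $r_0$ so large that all these contact points lie in the region where $\phi$ is strictly convex yields the first assertion.

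For strict convexity of $\phi_t=(1-t)\phi+t\psi$ I would argue at the level of Hessians, writing $\Hess\phi_t=(1-t)\Hess\phi+t\Hess\psi$ with both summands $\ge 0$. Let $R_0$ be such that $\phi$ is strictly convex on $\{|x|\ge R_0\}$, and enlarge $r_0$ so that $R_0$ is smaller than the radius of the spherical-cap region. On $\{|x|\ge R_0\}$ one has $\Hess\phi>0$, while on the spherical cap $\psi(x)=-\sqrt{r^2-|x|^2}$ satisfies $\Hess\psi>0$; since these two regions cover $\R^{n-1}$ for $r\ge r_0$ and $0<t<1$, we obtain $\Hess\phi_t>0$ everywhere, so $\phi_t$ is strictly convex.

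Finally I would match the family $\{\phi_t\}_{t\in[t_0,t_1]}$ to the cap framework of Section~\ref{sec:pushing}. Since $\phi=\psi$ near infinity we have $\phi_{t_0}=\phi_{t_1}$ there, so $\{(x,y):\phi_{t_1}(x)<y<\phi_{t_0}(x)\}$ is bounded; its closure is bounded below by the part $\omega_{t_1}$ of the strictly convex hypersurface $M_{t_1}=\{y=\phi_{t_1}\}$ on which $\phi_{t_1}<\phi_{t_0}$, and above by the corresponding part $\omega_{t_0}$ of $M_{t_0}$, the two meeting along the corner $\{\phi=\psi\}$. Taking $\rho_t(x,y)=\phi_t(x)-y$, whose zero set is $M_t$ and whose sublevel sets are the strictly convex epigraphs of the $\phi_t$ (the form of convexity actually exploited in Lemma~\ref{lem:main}), the conditions (a)--(c) preceding Proposition~\ref{prop:pushing} hold: (a) is the strict convexity just proved together with $\partial_y\rho_t=-1\ne 0$; (b) follows since $\phi_t$ is strictly decreasing in $t$ where $\phi>\psi$, so $\rho_t=\phi_t-\phi_s\le 0$ on $M_s$ for $s<t$; and (c) holds because $M_t\cap M_{t_0}$ is exactly the part of $M_{t_0}$ over $\{\phi=\psi\}$, independent of $t$. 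Hence $\overline{\{\phi_{t_1}<y<\phi_{t_0}\}}$ is a strictly convex cap with base $\omega_{t_0}\subset M_{t_0}$, in the sense of \eqref{eq:Ct}--\eqref{eq:Et}.

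The main obstacle is the first paragraph: pinning down the $\Cscr^1$ regularity of $\psi$ and the precise structure of $\Conv(E_\phi\cup r\cB)$. This is where both hypotheses enter essentially—BCEH forces the tangent-plane intercepts to $-\infty$, so the common tangents exist and their contact points escape to infinity, while strict convexity of $\phi$ near infinity makes those contacts unique and hence the gluing $\Cscr^1$. Once this geometry is established, the strict convexity of $\phi_t$ and the verification of the cap conditions are routine.
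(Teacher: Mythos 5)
Your overall strategy coincides with the paper's: realize $\Conv(E_\phi\cup r\cB)$ as the epigraph of a convex function $\psi$ equal to $-\sqrt{r^2-|x|^2}$ on a growing ball and equal to $\phi$ near infinity, deduce BCEH from the fact that the modification is compactly supported, get strict convexity of $\phi_t$ by covering $\R^{n-1}$ with the region where $\phi$ is strictly convex and the region where $\psi$ is the sphere, and then check the cap axioms (a)--(c) of Section~\ref{sec:pushing}. Two points deserve attention. First, the step you yourself identify as the main obstacle --- the $\Cscr^1$ regularity of $\psi$ --- is genuinely delicate for $n-1\ge 2$: the ``ruled region'' between the spherical cap and the graph of $\phi$ is not simply a union of common tangent hyperplanes but a stratified set swept out by segments joining contact points on the two hypersurfaces, and the assertion that the three pieces ``glue with matching first-order data'' is exactly what needs proof, not an observation. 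The paper avoids this by smoothing the function $\min\{\phi(x),-\sqrt{r^2-|x|^2}\}$ to a $\Cscr^1$ function with the same convex hull and then invoking Griewank--Rabier \cite[Theorem 3.2]{GriewankRabier1990} on the $\Cscr^1$ smoothness of convex envelopes; you should either cite such a result or supply the missing multidimensional argument. Second, your Hessian computation for the strict convexity of $\phi_t$ is not available as written: $\phi$ is only assumed $\Cscr^1$, so $\Hess\phi$ need not exist, and even for smooth functions strict convexity does not imply $\Hess>0$ (consider $x^4$ at the origin). The fix is immediate and is what the paper does: on the ball where $\psi(x)=-\sqrt{r^2-|x|^2}$ the sum of the convex function $(1-t)\phi$ and the strictly convex function $t\psi$ is strictly convex directly from the definition, and off that ball $(1-t)\phi$ is strictly convex for $t<1$; your covering logic is right, only the Hessian language should be dropped. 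With these repairs your argument is the paper's argument.
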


\begin{proof}
Consider the function on $\R^{n-1}$ given by
\[
	\tilde \phi_r(x)=
	\left\{\begin{array}{lc}
	\min\{\phi(x),-\sqrt{r^2-|x|^2}\}, & |x|<r, \\
	\phi(x), &  \! |x|\ge r.
	\end{array}\right.
\]
(Note that $\tilde \phi_r$ may be discontinuous at the points of the sphere $|x|=r$.)
The convex hull of its epigraph $E_{\tilde \phi_r}$ equals $\Conv(E\cup r\cB)$, 
which is closed by Theorem \ref{th:GaleKlee} (iv), and the set 
$h(E,r\cB)=\Conv(E\cup r\cB)\setminus E$ is bounded since $E$ has BCEH. 
By smoothing $\tilde \phi_r$ we get a function $\tilde \psi_r$ 
of class $\Cscr^1$ which agrees with $\phi$ near infinity  
such that $\Conv(E_{\tilde\psi_r}) = \Conv(E\cup r\cB)$.
By \cite[Theorem 3.2]{GriewankRabier1990} we conclude that 
$\Conv(E\cup r\cB)$ has $\Cscr^1$ boundary, so it is the epigraph $E_{\psi_r}$  
of a convex function $\psi_r:\R^{n-1}\to\R$ of class $\Cscr^1$ which agrees
with $\phi$ near infinity. 

Since $\phi$ grows at least linearly, there is a function $\tau(r)$ defined for  
$r\in \R_+$ large enough such that $\psi_r(x)=-\sqrt{r^2-|x|^2}$ for $|x|\le \tau(r)$
and $\tau(r) \to +\infty$ as $r\to+\infty$. By choosing $r$ large enough,
the compact set of points where the function $\phi$ fails to be strictly convex is contained
in the ball $|x|<\tau(r)$. Since on this ball we have that 
$\psi_r(x)=-\sqrt{r^2-|x|^2}$ which is strictly convex, the convex combinations 
$\phi_t$ in \eqref{eq:phit} of $\phi$ and $\psi=\psi_r$ are strictly convex on $\R^{n-1}$
for all $0<t<1$. For such $r$, the last statement in the proposition is evident.
(Note that the strictly convex functions $\rho_t(x,y)=\exp(\psi_t(x)-y)-1$ 
for $t\in (0,1)$ correspond to those used in Section \ref{sec:pushing}.) 
\end{proof}

%
%
\section{Proof of Theorem \ref{th:main}}\label{sec:proof}

For the definition and the main theorem on Oka manifolds, 
see \cite[Definition 5.4.1 and Theorem 5.4.4]{Forstneric2017E}. 
We shall use the following version of the Oka principle; 
see \cite[Theorem 1.3]{Forstneric2022Oka}.

%
%
\begin{theorem}\label{th:Oka}
Assume that $X$ is a Stein manifold, $K$ is a compact $\Oscr(X)$-convex set in $X$, 
$X'$ is a closed complex subvariety of $X$, 
$\Omega$ is an Oka domain in a complex manifold $Y$, $f:X\to Y$ is a 
continuous map which is holomorphic on a neighbourhood of $K$, 
$f|_{X'}:X'\to Y$ is holomorphic, and $f(X\setminus \mathring K) \subset \Omega$. 
Then there is a homotopy $\{f_t\}_{t\in[0,1]}$ of continuous maps $f_t:X\to Y$ 
connecting $f=f_0$ to a holomorphic map $f_1:X\to Y$ such that for every $t\in [0,1]$ 
the map $f_t$ is holomorphic on a neighbourhood of $K$, it agrees with $f$ on $X'$,
it approximates $f$ uniformly on $K$ and uniformly in $t\in[0,1]$ as closely as desired, 
and $f_t(X\setminus \mathring K) \subset \Omega$.
\end{theorem}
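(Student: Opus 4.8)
The plan is to reduce the statement to the standard Oka principle with approximation and interpolation for maps into the Oka manifold $\Omega$, applied on the complement of $K$, while leaving the given holomorphic map $f$ essentially unchanged near $K$. The key observation is that, although $f$ may take values in $Y\setminus\Omega$ near $K$, every modification we perform takes place away from $\mathring K$ and hence inside the Oka domain $\Omega$, where the full machinery of Oka theory (the convex approximation property and the spray-gluing lemma) is available.

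First I would fix a smooth strongly plurisubharmonic Morse exhaustion function $\rho:X\to\R$ such that $K$ lies in the sublevel set $D_0=\{\rho<0\}$, $\overline{D_0}$ is contained in the neighbourhood on which $f$ is holomorphic, and $\overline{D_0}$ is $\Oscr(X)$-convex (hence Runge in $X$); this is possible because $K$ is $\Oscr(X)$-convex. I then exhaust $X$ by relatively compact strongly pseudoconvex domains $D_0\Subset D_1\Subset D_2\Subset\cdots$ with no critical value of $\rho$ on the boundaries. The construction is inductive: having produced a map $f_j$, holomorphic on a neighbourhood of $\overline{D_j}$, which agrees with $f$ on $X'\cap\overline{D_j}$, approximates $f$ on $K$, satisfies $f_j(\overline{D_j}\setminus\mathring K)\subset\Omega$, and is close to $f_{j-1}$ on $\overline{D_{j-1}}$, I extend it across the shell $\overline{D_{j+1}}\setminus D_j$.

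Since this shell lies in $X\setminus\mathring K$, the extension is a problem of maps into $\Omega$. I would carry it out by the usual decomposition of the \emph{noncritical} extension into finitely many convex bumps (Cartan pairs), on each of which the convex approximation property of $\Omega$ furnishes a holomorphic map into $\Omega$ that is glued to the existing one with approximation on the overlap and interpolation along $X'$; each \emph{critical} point of $\rho$ in the shell, which necessarily lies in $X\setminus\mathring K$, is crossed by extending the map over the totally real core of the attached handle into $\Omega$ and then thickening and reducing to the noncritical case. Because $f_j(\overline{D_j}\setminus\mathring K)$ is a compact subset of the open set $\Omega$, it has positive distance $d_j$ to $Y\setminus\Omega$; requiring every subsequent correction on $\overline{D_j}$ to sum to less than $d_j/2$ guarantees that the limit $f_\infty=\lim_j f_j$ is holomorphic on all of $X$, agrees with $f$ on $X'$, approximates $f$ on $K$, and still satisfies $f_\infty(X\setminus\mathring K)\subset\Omega$. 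Each extension step is realised by a homotopy that itself keeps the map holomorphic near $K$, fixed on $X'$, and valued in $\Omega$ on the added region, so concatenating and reparametrising these homotopies (the $j$-th step occupying the interval $[1-2^{-j},\,1-2^{-j-1}]$) yields the required homotopy $\{f_t\}_{t\in[0,1]}$ from $f$ to $f_\infty$ with all the stated properties holding for every $t$.

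The main obstacle is the simultaneous preservation, throughout the inductive gluing, of the three constraints: holomorphicity, the range condition $f_j(\,\cdot\,)\in\Omega$ on the newly added region, and the interpolation $f_j=f$ along $X'$. Crossing a Morse critical point while keeping the extension inside $\Omega$ and compatible with $X'$ (when $X'$ meets the handle region) is the delicate point, as is calibrating the margins $d_j$ so that the small perturbations forced by the approximation requirement never push the map across $b\Omega$ in the limit. All of these are handled by the established machinery for the Oka principle with approximation and interpolation, which is why we simply invoke the reference cited above.
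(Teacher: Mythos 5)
The paper does not prove this theorem: it is imported verbatim from \cite[Theorem 1.3]{Forstneric2022Oka}, so there is no internal proof to compare against. Your outline is a faithful sketch of the standard argument behind that cited result --- exhaustion by strongly pseudoconvex domains, bumping/gluing via the convex approximation property on noncritical shells, handle attachment at Morse critical points, interpolation along $X'$, and the key observation that all modifications occur on $X\setminus \mathring K$, where the map already takes values in the Oka domain $\Omega$ --- so it is consistent with (indeed, a reasonable expansion of) the paper's appeal to the reference.
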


%
%
\begin{proof}[Proof of Theorem \ref{th:main}]
By Proposition \ref{prop:epigraph} there are complex coordinates 
$z=(z',z_n)$ on $\C^n$ such that the given set $E$ is an epigraph of the form \eqref{eq:EinCn}. 
We shall write $z=(x,y)$ where  $x=(z',\Re z_n)\in \C^{n-1}\times \R\cong \R^{2n-1}$ and 
$y=\Im z_n\in\R$, so $E=E_\phi=\{y\ge \phi(x)\}$ where $\phi\ge 0$ is a convex function as in 
Proposition \ref{prop:epigraph}. Let the set $K\subset X$ and the map $f_0:K\to \C^n$ 
be as in the theorem; in particular, $f_0(bK)\subset \C^n \setminus E$.
Thus, there are an open neighbourhood $U\subset X$ of $K$ and $\epsilon>0$
such that $f_0$ is holomorphic in $U$ and 
$f_0(U\setminus \mathring K)\subset \C^n\setminus E_{\phi-\epsilon}$.
By Azagra \cite[Theorem 1.8]{Azagra2013} there is a 
a real analytic strictly convex function $\phi_0:\R^{2n-1}\to \R$ such that 
$\phi-\epsilon < \phi_0 <\phi$. Its epigraph $E_0=\{(x,y) \in\C^n : y\ge \phi_0(x)\}$
is a closed strictly convex set with real analytic boundary which has BCEH
by Lemma \ref{lem:approximation}, and 
$f_0(U\setminus \mathring K)\subset \C^n\setminus E_{0}$.

Let $\B$ denote the open unit ball in $\C^n$ centred at $0$. Recall the notation $h(E,K)$
in \eqref{eq:h}. Pick a  number $r_0>0$. We can find an increasing sequence 
$r_k>0$ diverging to infinity such that 
\begin{equation}\label{eq:rk}
	\overline{h(E_0,r_k \cB)} \subset r_{k+1}\B\quad \text{for}\ k=0,1,2,\ldots.
\end{equation}
Indeed, since $E_0$ has BCEH, the set $h(E_0,r_k \cB)$ is bounded for each $k$, 
and hence \eqref{eq:rk} holds if the number $r_{k+1}$ is chosen large enough. Set 
\[
	E_{k+1}=\Conv(E_0\cup r_k\cB) = E_0\cup h(E_0,r_k\cB) 
	\quad \text{for $k=0,1,2,\ldots$}. 
\]
We clearly have that
$E_0\subset E_1\subset\cdots \subset \bigcup_{k=0}^\infty E_k =\C^n$. 
Furthermore, \eqref{eq:rk} shows that for $j=0,1,\ldots,k+1$ we have that 
$E_0\subset E_{j}\subset E_0\cup r_{k+1}\cB$ and hence
\begin{equation}\label{eq:Ekplustwo}
	E_{k+2} =	\Conv(E_j \cup r_{k+1}\cB) 
	\ \ \text{for}\ j=0,1,\ldots,k+1.
\end{equation}
Proposition \ref{prop:exhaustion} shows that for each $k=1,2,\ldots$ 
we have $E_k=\{y\ge \phi_k(x)\}$ where $\phi_k$ a convex function 
of class $\Cscr^1$ which agrees with $\phi_0$ near infinity, and $E_k$ has BCEH.
Hence,  
\[
	\Omega_k=\C^n\setminus E_k=\{(x,y)\in\C^n : y<\phi_k(x)\}
\]
is an Oka domain for every $k=0,1,\ldots$ by Corollary \ref{cor:Oka}. 
In view of $E_{k+2} = \Conv(E_k \cup r_{k+1}\cB)$ (see \eqref{eq:Ekplustwo}),
Proposition \ref{prop:exhaustion} also shows that if $r_{k+1}$ is chosen 
large enough then the function 
\begin{equation}\label{eq:psit}
	\psi_t=(1-t)\phi_k + t \phi_{k+2}:\C^{n-1}\times\R\to\R
\end{equation}
is strictly convex for every $t\in (0,1)$, and for each $0<t_0 < t_1<1$ the closure of the set 
\begin{equation}\label{eq:capk}
	C = \{(x,y): \psi_{t_1} < y < \psi_{t_0}\} 
\end{equation}
is a strictly convex cap as described in Section \ref{sec:pushing}.
(Note that the strictly convex functions $\rho_t(x,y)=\exp(\psi_t(x)-y)-1$ 
for $t\in (0,1)$ correspond to those used in Section \ref{sec:pushing}.) 
 
Choose an exhaustion $D_0\subset D_1\subset\cdots \subset \bigcup_{k=0}^\infty D_k=X$ 
by smoothly bounded, relatively compact, strongly pseudoconvex domains with 
$\Oscr(X)$-convex closures such that $K\subset D_0 \subset \bar D_0 \subset U$. 
For consistency of notation we set $D_{-1}=K$.
We now construct a sequence of holomorphic maps $f_k:\bar D_k\to \C^n$ 
satisfying the following conditions for $k=0,1,2,\ldots$:
\begin{enumerate}[\rm (a)]
\item $f_k(\overline {D_k \setminus D_{k-1}}) \subset \Omega_k=\C^n\setminus E_k$, 
\item $f_{k+1}(\overline {D_k \setminus D_{k-1}}) \subset \Omega_k$, and 
\item $f_{k+1}$ approximates $f_k$ uniformly on $\bar D_{k-1}$ as closely as desired.
\end{enumerate}
For $k=0$ the initial map $f_0$ in Theorem \ref{th:main} satisfies condition (a) 
while conditions (b) and (c) are void. Assuming inductively that we found maps 
$f_0,\ldots, f_k$ satisfying these conditions, the construction of the next map
$f_{k+1}$ is made in two steps as follows. 

By compactness of the set $f_k(bD_k)\subset \Omega_k=\{y<\phi_k(x)\}$ 
we can choose $t_0\in (0,1)$ small enough such that 
$f(bD_k)\subset  \{y<\psi_{t_0}(x)\}$, where the function $\psi_{t}$ $(t\in[0,1])$
is given by \eqref{eq:psit}. By \eqref{eq:rk} we can also choose 
$t_1\in (t_0,1)$ sufficiently close to $1$ such that 
\[
	E_{k+1} \subset \{(x,y): y\ge \psi_{t_1}(x)\}.
\]
Proposition \ref{prop:pushing} applied to the map $f_k:\bar D_k\to\C^n$,
the set $E_k$, and the strictly convex cap $C$ \eqref{eq:capk} 
(which corresponds to $C_1$ in Proposition \ref{prop:pushing})
gives holomorphic map $g_k:\bar D_k\to \C^n$ approximating $f_k$ on 
$\overline{D}_{k-1}$ and satisfying 
\begin{equation}\label{eq:gk}
	g_k(bD_k)\subset \{(x,y): y < \psi_{t_1}(x)\} \subset 
	\C^n\setminus E_{k+1} = \Omega_{k+1} 
	\ \ \ \text{and}\ \ \   
	g_k(\overline{D_k\setminus D_{k-1}})  \subset \Omega_k.
\end{equation}

In the second step, we use that $\Omega_{k+1}$ is an Oka domain. 
Since $\Omega_{k+1}$ is contractible and $g_k(bD_k)\subset \Omega_{k+1}$ 
by \eqref{eq:gk}, $g_k$ extends from $\bar D_k$ to a continuous map $X\to \C^n$ sending  
$X\setminus D_k$ to $\Omega_{k+1}$. Theorem \ref{th:Oka} applied to $g_k$ gives a 
holomorphic map $f_{k+1}:\bar D_{k+1}\to\C^n$ approximating $g_k$  
on $\bar D_k$ and satisfying 
$f_{k+1}(\overline{D_{k+1} \setminus D_{k}}) \subset \Omega_{k+1}$ 
(which is condition (a) for $k+1$) and 
$f_{k+1}(\overline{D_k\setminus D_{k-1}})  \subset \Omega_k$ (condition (b)).
Since $f_{k+1}$ approximates $g_k$ on $\bar D_k$ and
$g_k$ approximates $f_k$ on $\bar D_{k-1}$, $f_{k+1}$ also satisfies 
condition (c). This completes the induction step.

If the approximations are close enough then the sequence $f_k$ 
converges uniformly on compacts in $X$ to a holomorphic $f:X\to\C^n$. 
Conditions (a)--(c) and the fact that the sets $E_k$ exhaust $\C^n$
imply that $f$ is a proper holomorphic map satisfying 
$f(X\setminus \mathring K)\subset \Omega_0=\C^n\setminus E_0$. 
To construct proper holomorphic immersions and embeddings in suitable dimensions 
given in the theorem, we use the general position argument at every step to ensure that 
every map $f_k$ in the sequence is an immersion or an embedding. 
(See e.g.\ \cite[Corollary 8.9.3]{Forstneric2017E}.) If the convergence is fast enough 
then the same holds for the limit map $f$ by a standard argument.
\end{proof}

%
%
\begin{proof}[Proof of Corollary \ref{cor:main}]
Given a holomorphic map $f_0:K\to\C^n$ with $f_0(bK)\subset \C^n\setminus E_\phi$
as in Theorem \ref{th:main}, Proposition \ref{prop:approximation} furnishes
a closed convex set $E_\psi\supset E_\phi$ with BCEH such that 
$f_0(bK) \subset \C^n\setminus E_\psi$. Applying Theorem \ref{th:main} with
$E_\psi$ gives the desired conclusion.
\end{proof}

We have the following analogue of Theorem \ref{th:main}
with interpolation on a closed complex subvariety of $X$. 
Unlike in the above corollary, approximation of $E$ from the outside
by convex sets enjoying BCEH cannot be used since the subvariety
$f(X')$ may have zero distance to $bE$.
This results extends the case of \cite[Theorem 15]{ForstnericRitter2014}  
when $E$ is a compact convex set.

%
%
\begin{theorem}\label{th:interpolation}
Let $E$ be a closed convex set in $\C^n$ $(n>1)$ with  $\Cscr^1$ boundary
which is strictly convex near infinity and has bounded convex exhaustion hulls. 
Let $X$ be a Stein manifold, $K \subset X$ be a compact $\Oscr(X)$-convex set, 
$U\subset X$ be an open set containing $K$, $X'$ be a closed complex subvariety of $X$, 
and $f_0 : U\cup X'\to\C^n$ be a holomorphic map such that $f_0|_{X'}: X'\to\C^n$ 
is proper holomorphic and $f_0(bK \cup (X'\setminus K)) \cap E =\varnothing$. 
Given $\epsilon >0$ there exists a proper holomorphic map 
$f: X\to \C^n$ satisfying the following conditions:
\[
	(a)\ f(X\setminus \mathring K) \subset \C^n\setminus E, \qquad (b)\ \|f-f_0\|_K<\epsilon, 
	\qquad (c)\ f|_{X'}=f_0|_{X'}.
\] 
If $2\dim X \le n$ then $f$ can be chosen an immersion (and an embedding if $2\dim X+1\le n$) 
provided that $f_0|_{X'}$ is one. 
\end{theorem}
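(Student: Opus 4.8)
The plan is to follow the proof of Theorem~\ref{th:main} almost verbatim, upgrading its two principal steps—the pushing of Proposition~\ref{prop:pushing} and the Oka step of Theorem~\ref{th:Oka}—to interpolatory versions, and adding a bookkeeping argument that keeps the image of $X'$ clear of the caps used in the construction. Since $E$ has $\Cscr^1$ boundary, is strictly convex near infinity, and has BCEH, Proposition~\ref{prop:epigraph} lets me choose coordinates in which $E=E_\phi$ is an epigraph \eqref{eq:EinCn} with $\phi$ convex, of class $\Cscr^1$, and strictly convex near infinity. I then build, exactly as in the proof of Theorem~\ref{th:main}, an increasing exhaustion $E=E_0\subset E_1\subset\cdots$ by convex sets with BCEH and Oka complements $\Omega_k=\C^n\setminus E_k$, where $E_{k+1}=\Conv(E_0\cup r_k\cB)$ and the radii $r_k\to\infty$ are chosen as in \eqref{eq:rk}, so that Proposition~\ref{prop:exhaustion} produces at each step a strictly convex cap $C_k\subset E_{k+2}\setminus E_k$. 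The essential new point, which replaces the unavailable ``approximation of $E$ from outside'', is that because $E$ has BCEH \emph{every cap $C_k$ is bounded}.

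Next I choose an adapted exhaustion of $X$. Since $f_0|_{X'}$ is proper and each cap is bounded, I may take a normal exhaustion $D_0\subset D_1\subset\cdots$ of $X$ by smoothly bounded, relatively compact, strongly pseudoconvex domains with $\Oscr(X)$-convex closures, with $K\subset D_0$, growing fast enough that for every $k$
\[
	f_0(X'\setminus D_{k-1})\cap C_k=\varnothing \qquad\text{and}\qquad f_0(X'\setminus D_k)\subset\Omega_{k+1}.
\]
Both requirements are achievable, with no circularity, because $C_k\subset E_{k+2}\setminus E_k$ and the sets $E_{k+2}\setminus E_0$, $E_{k+1}\setminus E_0$ are bounded (as $E$ has BCEH), so their $f_0|_{X'}$-preimages are compact. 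Combined with $f_0(X'\setminus\mathring K)\cap E_0=\varnothing$, this shows that along the subvariety the given map already lies out of every cap and below the relevant superlevel set, so \emph{there is nothing to push on $X'$}.

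With this in place I run the inductive construction of Theorem~\ref{th:main}, producing maps $f_k$ with $f_k(\overline{D_k\setminus D_{k-1}})\subset\Omega_k$, with $f_{k+1}$ close to $f_k$ on $\bar D_{k-1}$, and now additionally with $f_k|_{X'}=f_0|_{X'}$ for all $k$. The pushing step uses an interpolatory refinement of Proposition~\ref{prop:pushing} (equivalently of Lemma~\ref{lem:main}) in which one also requires $g|_{X'\cap\bar D}=f|_{X'\cap\bar D}$; this is legitimate precisely because $f_0(X'\cap bD_k)\subset f_0(X'\setminus D_{k-1})$ misses $C_k$, so no boundary deformation is needed near $X'\cap bD_k$, and the Riemann--Hilbert and spray-gluing apparatus of \cite{DrinovecForstneric2007DMJ,DrinovecForstneric2010AJM} goes through with the corrections taken in the ideal sheaf of $X'$, so the modification vanishes on $X'\cap\bar D_k$. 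After a Mergelyan approximation (holomorphic on a neighbourhood of $\bar D_k$, interpolating $f_0$ on $X'\cap\bar D_k$) I invoke Theorem~\ref{th:Oka}, which already carries interpolation on $X'$ built in; applied to the extended map $g_k$, which satisfies $g_k|_{X'}=f_0|_{X'}$ and $g_k(X\setminus\mathring D_k)\subset\Omega_{k+1}$, it yields $f_{k+1}$ with the same interpolation property. Taking a sufficiently fast-converging limit gives a holomorphic $f:X\to\C^n$ with $f|_{X'}=f_0|_{X'}$ (condition (c), preserved exactly at every step) and $\|f-f_0\|_K<\epsilon$ (condition (b)); since $\Omega_k\subset\{|z|>r_{k-1}\}$ and $\bigcup_k E_k=\C^n$, the avoidance conditions force properness off $X'$, while properness along $X'$ is inherited from $f_0|_{X'}$, giving (a) and properness of $f$. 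For the immersion/embedding statement I insert at each stage a general-position perturbation relative to $X'$—vanishing on $X'$, and keeping $f_0|_{X'}$, which is assumed an immersion (embedding)—as in \cite[Corollary 8.9.3]{Forstneric2017E}.

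I expect the interpolatory pushing lemma to be the main obstacle: one must guarantee that the interior correction produced by the (convexified) Riemann--Hilbert problem vanishes \emph{exactly}, not merely approximately, on $X'\cap\bar D_k$, while still lowering $\rho_t$ enough on the part of $bD_k$ sent into the cap and not decreasing $\rho_t\circ f$ too much elsewhere. The boundedness of the caps together with the properness of $f_0|_{X'}$ are exactly what make the hypothesis $f_0(X'\cap bD_k)\cap C_k=\varnothing$ attainable through the choice of exhaustion, and this is the structural reason the interpolation requirement does not conflict with the construction even though $f(X')$ may have zero distance to $bE$.
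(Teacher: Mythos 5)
Your proposal is correct and follows essentially the same route as the paper: both run the induction of Theorem \ref{th:main} with $E$ itself in place of the outer strictly convex approximation (which is unnecessary, and unavailable, since $bE$ is already $\Cscr^1$ and strictly convex near infinity), choose the exhaustion $D_k$ of $X$ via properness of $f_0|_{X'}$ and boundedness of $E_{k+2}\setminus E$ so that $f_0(X'\cap bD_k)$ already avoids the relevant caps, keep the modifications of Proposition \ref{prop:pushing} fixed on $X'$, and use the interpolation built into Theorem \ref{th:Oka}. The only cosmetic difference is that you fix the entire exhaustion of $X$ in advance whereas the paper selects $D_{k+1}$ inductively; both work because the condition involved depends only on $f_0|_{X'}$.
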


\begin{proof}
This is proved by a small modification of the proof of Theorem \ref{th:main},
similar to the one in \cite[proof of Theorem 15]{ForstnericRitter2014}.
The initial step in the proof, approximating $E$ from the outside by a strictly convex set,
is  unnecessary since $bE$ is strictly convex near infinity. The main (and essentially the only)
change comes in the choice of the exhaustion $D_k$ of the Stein manifold $X$. 
In the inductive step when constructing the map $f_{k+1}$, we must assume in addition 
that $f_k(bD_k\cap X') \subset \Omega_{k+1}=\C^n\setminus E_{k+1}$. 
Then, we push the image of $bD_k$ out of $E_{k+1}$ by the same method 
as before, using Proposition \ref{prop:pushing} but ensuring that the modifications 
are kept fixed on $X'$ and small near $bD_k\cap X'$. 
This is possible since the method from \cite{DrinovecForstneric2010AJM} 
is applied locally near $bD_k$ (away from $bD_k\cap X'$),
and these local modifications are glued together by preserving the value
of the map on $X'$. We refer to \cite[proof of Theorem 15]{ForstnericRitter2014}
for a more precise description. This gives the next holomorphic map
$f_{k+1}:X\to \C^n$ satisfying $f_{k+1}(X\setminus D_k)\subset \Omega_{k+1}$,
$f_{k+1}|_{X'}= f_k|_{X'}$, and conditions (b) and (c) in the proof of Theorem \ref{th:main}. 
We then choose the next domain $D_{k+1}\subset X$ big enough such that 
$f_{k+1}(bD_{k+1}\cap X') \subset \Omega_{k+2}=\C^n\setminus E_{k+2}$.
This is possible since the map $f_{k+1}|_{X'}=f_0|_{X'}:X'\to \C^n$ is proper,
$f_0(X' \setminus \mathring K)\subset \Omega=\C^n\setminus E$, and the domain
$\Omega_{k+2}$ agrees with $\Omega$ near infinity by the construction.
Clearly the induction step is now complete. Assuming that the approximations
are close enough, the sequence $f_k$ converges to a limit holomorphic 
map $f:X\to\C^n$ satisfying the stated conditions.
\end{proof}

%
%
%
%
\subsection*{Acknowledgements}
The first named author is supported by grants P1-0291, J1-3005, and N1-0137 from ARRS, 
Republic of Slovenia. The second named author is supported by the 
European Union (ERC Advanced grant HPDR, 101053085) and grants P1-0291, J1-3005, and N1-0237 from ARRS, Republic of Slovenia. The authors wish to thank Antonio Alarc\'on for 
helpful discussions and information concerning the case pertaining to minimal surfaces.


%




\vspace*{5mm}
\noindent Barbara Drinovec Drnov\v sek

\noindent Faculty of Mathematics and Physics, University of Ljubljana, Jadranska 19, SI--1000 Ljubljana, Slovenia

\noindent 
Institute of Mathematics, Physics and Mechanics, Jadranska 19, SI--1000 Ljubljana, Slovenia.

\noindent 
e-mail: {\tt barbara.drinovec@fmf.uni-lj.si}

\vspace*{5mm}
\noindent Franc Forstneri\v c

\noindent Faculty of Mathematics and Physics, University of Ljubljana, Jadranska 19, SI--1000 Ljubljana, Slovenia

\noindent 
Institute of Mathematics, Physics and Mechanics, Jadranska 19, SI--1000 Ljubljana, Slovenia

\noindent e-mail: {\tt franc.forstneric@fmf.uni-lj.si}

\end{document}